\numberwithin{equation}{section}
\theoremstyle{plain}
\newtheorem{thm}{Theorem}
\theoremstyle{plain}
\newtheorem{lem}{Lemma}[section]
\theoremstyle{plain}
\newtheorem*{untheorem}{Theorem}
\theoremstyle{plain}
\theoremstyle{remark}
\newtheorem*{remark}{Remark}
\theoremstyle{definition}
\newtheorem*{case0}{Simple Case}
\theoremstyle{definition}
\newtheorem*{case1a}{Case 1A}
\theoremstyle{definition}
\newtheorem*{case1b}{Case 1B}
\theoremstyle{definition}
\newtheorem*{case2a}{Case 2A}
\theoremstyle{definition}
\newtheorem*{case2b}{Case 2B}
\theoremstyle{definition}
\newtheorem*{case3a}{Case 3A}
\theoremstyle{definition}
\newtheorem*{case3b}{Case 3B}
\theoremstyle{definition}
\newtheorem*{casetau0a}{Case $\bftau$A}
\theoremstyle{definition}
\newtheorem*{casetau0b}{Case $\bftau$B}
\theoremstyle{definition}
\newtheorem*{casetau1a}{Case ($\bftau+\bone$)A}
\theoremstyle{definition}
\newtheorem*{casetau1b}{Case ($\bftau+\bone$)B}
\theoremstyle{definition}
\newtheorem*{cases1b}{Case ($\bfits-\bone$)B}
\def\bfits{\boldsymbol{s}}
\def\bftau{{\boldsymbol\tau}}
\def\bone{\mathbf{1}}
\def\AAA{\mathcal{A}}
\def\III{\mathcal{I}}
\def\LLL{\mathcal{L}}
\def\PPP{\mathcal{P}}
\DeclareMathOperator{\length}{length}
\renewcommand{\le}{\leqslant}
\renewcommand{\ge}{\geqslant}
\title[Generalization of a density theorem of Khinchin]
{Generalization of a density theorem of\\
Khinchin and diophantine approximation}
\author[Beck]{J. Beck}
\address{Department of Mathematics, Rutgers University, Hill Center for the Mathematical Sciences, Piscataway NJ 08854, USA}
\email{jbeck@math.rutgers.edu}
\author[Chen]{W.W.L. Chen}
\address{Department of Mathematics and Statistics, Macquarie University, Sydney NSW 2109, Australia}
\email{william.chen@mq.edu.au}
\keywords{geodesics, billiards, density}
\subjclass[2010]{11K38, 37E35}
\begin{document}

\begin{abstract}
The continuous version of a fundamental result of Khinchin says that a half-infinite torus line in the unit square $[0,1]^2$ exhibits superdensity, which is a best form of time-quantitative density, if and only if the slope of the geodesic is a badly approximable number.
In this paper, we give a proof of the extension of this result of Khinchin to the case when the unit torus $[0,1]^2$ is replaced by a finite polysquare surface, or square tiled surface.
The argument is based on diophantine approximation and continued fractions, traditional tools in number theory.
In particular, we use the famous $3$-distance theorem in diophantine approximation combined with an iterative process.
In short, this is a very number-theoretic study of a very number-theoretic problem.

This paper improves on an earlier result of the authors and Yang~\cite{BCY1} where it is shown that badly approximable numbers that satisfy a quite severe technical restriction on the digits of their continued fractions lead to superdense geodesics.
Here we overcome this technical impediment.

This paper is self-contained, and the reader does not need any knowledge of dynamical systems.
\end{abstract}

\maketitle

\thispagestyle{empty}

%
%

\section{Introduction}\label{sec1}

It is well known that the distribution of the irrational rotation sequence $n\alpha\bmod{1}$, $n=1,2,3,\ldots,$ is intimately related to the distribution of
half-infinite torus lines of slope $\alpha$ in the unit square $[0,1]^2$, \textit{i.e.}, geodesics of slope $\alpha$ on the unit torus $[0,1]^2$.
An old result of Khinchin \cite[Theorem~26]{K2} implies the following result concerning superdensity of geodesics; an alternative proof can be found in~\cite[Lemma~6.1.1]{BCY1}.

\begin{untheorem}[Khinchin]
Any half-infinite geodesic is superdense on the unit torus $[0,1]^2$ if and only if the slope of the geodesic is a badly approximable number.
\end{untheorem}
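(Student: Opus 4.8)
The plan is to translate the statement about the geodesic into one about the rotation sequence $n\alpha\bmod 1$ and then resolve the latter with the three-distance theorem. Since $(x,y)\mapsto(y,x)$ is an isometry of the torus carrying slope $\alpha$ to slope $1/\alpha$, and $\alpha$ is badly approximable if and only if $1/\alpha$ is, we may assume $0<\alpha<1$; if $\alpha$ is rational the geodesic is closed, hence not dense, while a rational is not badly approximable, so we may also assume $\alpha$ irrational. Parametrize the geodesic by arc length from its starting point $(x_0,y_0)$. Between two consecutive crossings of the vertical circle $\{x=0\}$ it sweeps once across every value $x=a\in[0,1]$, meeting the line $\{x=a\}$ at heights $y_0+(n+a)\alpha\bmod 1$ for $n=0,1,2,\dots$, and an initial arc of length $L$ accounts for $N\asymp L$ such periods. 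Thus the heights at which this arc meets $\{x=a\}$ form, up to a translation depending only on $a$, the initial block $\{n\alpha\bmod1:0\le n\le N\}$ of the rotation orbit, and the torus distance from the arc to a target $(a,b)$ equals, up to a bounded factor, the distance from a fixed point of the circle to that block. Hence superdensity of the geodesic --- for all targets, from any starting time, with one uniform constant --- is equivalent to: for every $N$, the first $N$ points of $\{n\alpha\bmod1\}$ are $O(1/N)$-dense on the circle with an absolute implied constant.

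For the ``if'' direction, suppose the partial quotients of $\alpha$ are bounded by $K$, so $q_{k+1}\le(K+1)q_k$ for all $k$, where $q_k$ denotes the $k$-th convergent denominator. Given $N$, let $q_k$ be the largest convergent denominator with $q_k\le N$. Adding points only subdivides gaps, so the largest gap among the first $N$ points of $\{n\alpha\bmod1\}$ is at most the largest gap among the first $q_k$ of them, which by the three-distance theorem is $\|q_{k-1}\alpha\|+\|q_k\alpha\|<2\|q_{k-1}\alpha\|<2/q_k\le 2(K+1)/N$. So the block is $C_K/N$-dense, and taking $N=\lceil C_K/\eps\rceil$ shows that within arc length at most $C_K'/\eps$ the geodesic passes, for every target $(a,b)$, through a point whose first coordinate is exactly $a$ and whose second coordinate lies within $\eps$ of $b$; since the construction is translation-invariant, the same constant works from any starting time. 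This is superdensity.

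For the ``only if'' direction we use the contrapositive. If $\alpha$ is not badly approximable then, the rational case being trivial, there is a sequence $k_j\to\infty$ with $a_{k_j+1}\to\infty$. Fix such a $k$ and take $N\approx q_{k+1}/2$, i.e.\ about $a_{k+1}/2$ periods worth of orbit points; the three-distance theorem then shows the first $N$ points of $\{n\alpha\bmod1\}$ still leave a gap of length $\approx\tfrac12\|q_{k-1}\alpha\|\approx 1/(2q_k)$, which is $\approx(a_{k+1}/4)\cdot(1/N)$. Choose the target $(a,b)$ with $a=0$ and $b$ the midpoint of this gap, and set $\eps$ a fixed small fraction of the gap length so as to absorb the heights the geodesic attains at $x$-values slightly off $0$. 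Then the geodesic stays at torus distance $\ge\eps$ from $(a,b)$ throughout an initial arc of length $\asymp q_{k+1}\asymp a_{k+1}q_k\asymp a_{k+1}/\eps$, which exceeds $c/\eps$ once $a_{k+1}$ is large. As this recurs for infinitely many $k$, no uniform $c$ can work and the geodesic is not superdense.

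The main obstacle is the bookkeeping in the two appeals to the three-distance theorem: one must identify exactly which nonnegative integer combinations of $\|q_{k-1}\alpha\|$ and $\|q_k\alpha\|$ arise as gap lengths for a given $N\in[q_k,q_{k+1})$, and track how the largest gap evolves as $N$ runs from $q_k$ to $q_{k+1}$ --- it stays $\asymp 1/q_k$ and only collapses to $\asymp 1/N$ as $N$ approaches $q_{k+1}$ --- so that the half-period choice $N\approx q_{k+1}/2$ in the converse is calibrated against the bound in the direct part. The only other point needing care is the one-line neighbourhood estimate in the converse showing the geodesic cannot sneak up on $(a,b)$ along a vertical line near, but not equal to, $\{x=0\}$, which is why $\eps$ is taken a fixed fraction of the gap rather than exactly half of it. None of this is deep; it is elementary continued-fraction arithmetic combined with the flat geometry of the torus.
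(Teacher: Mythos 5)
Your proposal is essentially correct, but note that the paper itself does not prove this statement: it is quoted from Khinchin \cite{K2} with an alternative proof referenced as \cite[Lemma~6.1.1]{BCY1}, so there is no in-text argument to compare with line by line. What you do is, in effect, the torus specialization of the machinery the paper builds for the polysquare case in Sections~\ref{sec2}--\ref{sec3}: you record the heights at which the orbit crosses a fixed vertical circle, observe that the crossing map is the rotation by $\alpha$ (the paper's interval exchange $T$ taken modulo~$1$; with a single square there is no singularity to handle), and then read off both implications from the $3$-distance theorem. Your quantitative bookkeeping is right: for partial quotients bounded by $K$ and $q_k\le N<q_{k+1}$, the largest gap of $\{n\alpha\}$, $0\le n<q_k$, is $\Vert q_{k-1}\alpha\Vert+\Vert q_k\alpha\Vert<2/q_k<2(K+1)/N$, which is exactly the $2$-distance specialization the paper records in \eqref{eq3.1} (with the index shifted by one); and for $a_{k+1}$ large the choice $N\approx q_{k+1}/2$ leaves a surviving gap of order $(a_{k+1}/N)$, which defeats any fixed superdensity constant along a subsequence --- this is the standard quantitative converse, consistent with the paper's remark that linear length cannot be replaced by sublinear length. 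The points to nail down in a written version are minor and you flag most of them: the degenerate slopes (rational, zero, vertical), where the geodesic is closed and trivially not superdense; the factor $\sqrt{1+\alpha^2}\le\sqrt{2}$ comparing perpendicular distance to the geodesic with vertical distance along $\{x=a\}$, needed in both directions (in the converse it is what lets you pass from a gap in the crossing heights at $x=0$ to a genuine torus-distance lower bound for the whole initial arc); and the small edge case in the $3$-distance representation $n=\mu q_{k-1}+q_{k-2}+r$ when $a_k=1$ and $n=q_k-1$ (the two-gap conclusion still holds, but the representation with $\mu\ge1$ needs a one-line adjustment, an issue the paper also glosses over). None of these affects the soundness of your approach.
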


Superdensity is a time-quantitative criterion.
A half-infinite geodesic $\LLL(t)$, $t\ge0$, equipped with the usual arc-length parametrization, is superdense on the unit torus $[0,1]^2$ if there exists an absolute constant $C_1=C_1(\LLL)>0$ such that, for every integer $n\ge1$, the initial segment $\LLL(t)$, $0\le t\le C_1n$, of the geodesic gets
$(1/n)$-close to every point of $[0,1]^2$.

This concept of superdensity is a best possible form of time-quantitative density, in the sense that the linear length $C_1n$ cannot be replaced by any sublinear length $o(n)$ as $n\to\infty$.
For a simple proof of this; see \cite[Section~6.1]{BCY1}.

A very natural number-theoretic question concerns possible extension of the result of Khinchin by replacing the unit torus $[0,1]^2$ by a finite surface of a certain kind.

A finite \textit{polysquare region}, or a finite \textit{square tiled region} in the terminology of dynamical systems, is an arbitrary connected, but not necessarily simply-connected, polygon $P$ on the plane which is tiled with unit squares, assumed to be closed, that we call the \textit{atomic squares} of~$P$, and which satisfies the following conditions:

(i) Any two atomic squares in $P$ either are disjoint, or intersect at a single point, or have a common edge.

(ii) Any two atomic squares in $P$ are joined by a chain of atomic squares where any two neighbors in the chain have a common edge.

Note that $P$ may have \textit{holes}, and we also allow \textit{whole barriers} which are horizontal or vertical \textit{walls} that consist of one or more boundary edges of atomic squares.
For example, the polysquare region in the picture on the left in Figure~1.1 has $5$ atomic squares, whereas the polysquare region in the picture on the right in Figure~1.1 has $32$ atomic squares, $2$ holes as well as $3$ horizontal walls and $4$ vertical walls.

\begin{displaymath}
\begin{array}{c}
\includegraphics{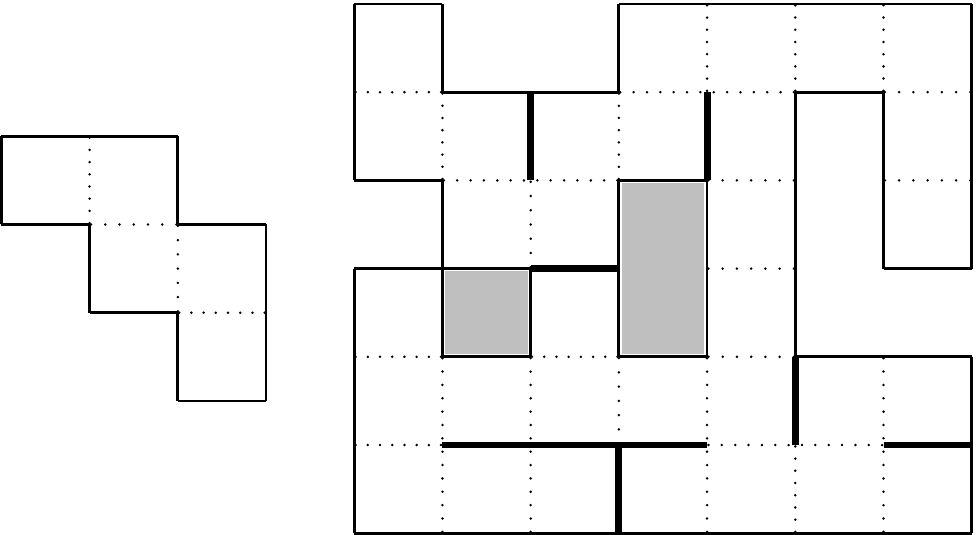}
\\
\mbox{Figure 1.1: some finite polysquare regions}
\end{array}
\end{displaymath}

Furthermore, a finite polysquare region can be converted to a finite \textit{polysquare surface}, or \textit{square tiled surface}, by pairwise identification of the boundary horizontal edges and pairwise identification of the boundary vertical edges.
Geodesic flow on this surface is thus $1$-direction linear flow.

The concept of superdensity can be extended to finite polysquare surfaces in a natural way.
A half-infinite geodesic $\LLL(t)$, $t\ge0$, equipped with the usual arc-length parametrization, is superdense on a finite polysquare surface $\PPP$ if there exists an absolute constant $C_1=C_1(\PPP;\LLL)>0$ such that, for every integer $n\ge1$, the initial segment $\LLL(t)$, $0\le t\le C_1n$, of the geodesic gets $(1/n)$-close to every point of~$\PPP$.

Using traditional tools in number theory based on diophantine approximation and continued fractions, we give a proof of the following extension of the result of Khinchin.

\begin{thm}\label{thm1}
Let $\PPP$ be an arbitrary finite polysquare surface.
A half-infinite geodesic that does not hit a vertex of $\PPP$ is superdense on $\PPP$ if and only if the slope of the geodesic is a badly approximable number.
\end{thm}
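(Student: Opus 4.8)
The plan is to translate the geodesic flow on $\PPP$ into a skew product over the circle rotation $R_\alpha\colon y\mapsto\{y+\alpha\}$ and then run a continued-fraction renormalization driven by the $3$-distance theorem. First I would normalize: reflecting if necessary, assume the slope is $\alpha\in(0,1)$ (rational $\alpha$ gives a closed or vertex-hitting geodesic, which is not dense, in agreement with rationals not being badly approximable). Let $V\subset\PPP$ be the union of the left vertical edges of the $M$ atomic squares, one segment per square, so $V\cong\{1,\dots,M\}\times[0,1]$ with coordinates (square index, height). A short geometric computation shows that for $\alpha\in(0,1)$ every vertex-avoiding geodesic meets $V$ with constant arc-length gap $\sqrt{1+\alpha^2}$ — each excursion between consecutive crossings traverses one atomic square (if the crossing height is $<1-\alpha$) or two (otherwise) — and that the first-return map to $V$ is exactly the skew product
\[
\rho(i,y)=\bigl(\sigma_{b(y)}(i),\{y+\alpha\}\bigr),\qquad b(y)=\mathbf{1}\bigl[y\in[1-\alpha,1)\bigr],
\]
where $\sigma_0,\sigma_1\in S_M$ are the ``move right'' and ``move up then right'' permutations of the atomic squares. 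Connectedness of $\PPP$ (condition (ii)) is precisely the statement that $\langle\sigma_0,\sigma_1\rangle$ acts transitively on $\{1,\dots,M\}$; holes and walls only alter which permutations $\sigma_0,\sigma_1$ occur, not the skew-product shape. With this dictionary (and the usual care near vertices), superdensity of a geodesic is equivalent to the following quantitative density of the $\rho$-orbit of the corresponding point $(i_0,y_0)$: there is $C=C(\PPP;\alpha)$ such that for every $(j,z)\in V$ and every $N\ge1$ there is $n\le CN$ with $i_n=j$ and $|y_n-z|<1/N$, where $(i_n,y_n)=\rho^n(i_0,y_0)$ and $y_n=\{y_0+n\alpha\}$.

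For the ``only if'' direction this dictionary makes the conclusion immediate from Khinchin's theorem: if a geodesic is superdense on $\PPP$, then intersecting it with a single segment of $V$ shows that $\{y_0+n\alpha\bmod1:0\le n\le C_1N\}$ contains a $(1/N)$-net of $[0,1]$ for every $N\ge1$, and the (hard) direction of the quoted theorem of Khinchin forces $\alpha$ to be badly approximable. Equivalently, one may project the geodesic under the degree-$M$ covering $\PPP\to[0,1]^2$ obtained by mapping every atomic square identically onto the unit torus, and invoke Khinchin downstairs.

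The substance is the ``if'' direction. Assume $\alpha$ is badly approximable, with partial quotients $a_k\le A$ and convergent denominators $q_k$; I must show every $\rho$-orbit satisfies the quantitative density above. Writing $b_n=b(\{y_0+n\alpha\})$, this binary sequence is a coding of the rotation by an interval of length $\alpha$, hence Sturmian-type, and $i_n=\sigma_{b_{n-1}}\cdots\sigma_{b_0}(i_0)$. The engine is renormalization along the continued fraction, $\rho^{q_{k+1}}=(\rho^{q_k})^{a_{k+1}}\circ\rho^{q_{k-1}}$: since $\{y+q_k\alpha\}=\{y\pm\|q_k\alpha\|\}$ with $\|q_k\alpha\|\asymp 1/q_k$ (here badly approximable is used), each $\rho^{q_k}$ is itself a skew product over a small rotation, and the $3$-distance theorem describes the $\asymp q_k$ intervals on which the $S_M$-valued cocycle of $\rho^{q_k}$ is locally constant, together with how this partition refines as $k$ grows. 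I would then prove, by induction on $k\ge k_0(\PPP)$, that every geodesic segment on $\PPP$ of length $\asymp q_k$ comes $(C(\PPP)/q_k)$-close to every point of $\PPP$: the base case $k_0$ (once $q_{k_0}$ exceeds a threshold depending only on $\PPP$) follows from transitivity of $\langle\sigma_0,\sigma_1\rangle$ together with the $3$-distance structure of the $\asymp q_{k_0}$ near-returns of the rotation; the inductive step writes a length-$q_{k+1}$ segment as $a_{k+1}=O(1)$ nearly parallel length-$q_k$ segments (plus a length-$q_{k-1}$ tail), each $(C/q_k)$-dense by hypothesis, mutually offset in the transverse direction by the multiples of $\|q_k\alpha\|\asymp 1/q_{k+1}$ and in the sheet index by the renormalized cocycle, and verifies that these $O(1)$ translated-and-permuted copies interlock into a $(C/q_{k+1})$-net. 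Finally, given $n\ge1$, picking the least $k$ with $q_k\ge Cn$ gives $q_k\le(A+1)q_{k-1}<(A+1)Cn$, so an initial segment of $\LLL$ of length $C_1n$ with $C_1=C_1(\PPP;\LLL)$ large enough contains a length-$\asymp q_k$ subsegment that is $(C/q_k)$-dense, hence $(1/n)$-dense; the finitely many small $n$ are absorbed into $C_1$ using minimality of the flow.

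The hard part is the interlocking step in the induction. Merely knowing that each length-$q_k$ segment is $(C/q_k)$-dense in every sheet does \emph{not} by itself imply that the $a_{k+1}$ permuted, transversally shifted copies fill the remaining gaps down to scale $1/q_{k+1}$: one must use the precise $3$-distance description of how a length-$q_k$ segment visits each individual sheet and show that this visiting pattern is compatible with the shifts by $\|q_k\alpha\|$, uniformly over all large $k$ and all starting points. (Indeed, for some surfaces a single scale $q_k$ can misbehave, and one must allow a bounded window of scales — harmless since badly approximable keeps consecutive scales within a factor $A+1$.) This uniformity is exactly what the digit restriction of \cite{BCY1} was forced to assume; removing it — by propagating the cocycle along the $3$-distance partition through the renormalization, rather than retaining only its coarse density consequences — is the technical heart of the argument, and the main obstacle I expect to confront.
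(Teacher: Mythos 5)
Your setup (the first-return map to a transversal, which is the paper's interval exchange $T$ on $[0,s)$ acting as an $\alpha$-shift modulo one, together with the $3$-distance theorem and the reduction of the ``only if'' direction to Khinchin by projecting modulo one) matches the paper. But the ``if'' direction, which is the entire content of the theorem, is not actually proved in your proposal: the inductive ``interlocking'' step — showing that the $a_{k+1}$ permuted, transversally shifted length-$q_k$ blocks fill the gaps down to scale $1/q_{k+1}$ uniformly in $k$ and in the starting point — is exactly the step you concede you do not know how to do (``the main obstacle I expect to confront''). As you yourself note, this is precisely the uniformity that forced the street-LCM digit restriction in the earlier work, so the proposal stops where the real difficulty begins; as written it is a plan, not a proof, and there is no evident way to close the gap within your bottom-up induction over all scales $k$, since the sheet permutation attached to the renormalized cocycle can a priori conspire against the transversal shifts at infinitely many scales.

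The paper avoids this obstacle by a structurally different argument in which the number of refinement steps is bounded by the number $s$ of square faces, not by an induction over all $k$. One argues by contradiction from an $(\LLL_\alpha;M)$-free interval $I_0$: choose the \emph{longest} special interval $J_k(\ell_1)$ (built from the $n=q_{k+1}-1$ case of the $3$-distance theorem, which yields only two gap lengths, buffer zones around the singularities, and substantially overlapping intervals) with some copy $r(\ell_1)+J_k(\ell_1)\subset I_0$; this pins $\length(I_0)<8/q_k$. A $T$-power extension argument (Lemma~\ref{lem50}) shows the trimmed orbit misses $r(q)+J_k(q)$ for \emph{all} $1\le q\le q_{k+1}-2$. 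Then a dichotomy at scale $k+8$: either the intersection property of Case~1A holds, which is refuted by a synchronization argument (neighboring $s$-copy extensions are missed in the same residues, so each of the $2s$ streets is either entirely missed or dense at scale $1/q_{k+8}$, and the $T$-images of the missed streets would have to meet the dense ones in many points), or one gains a \emph{second} missed residue at scale $k+8$ and iterates. After at most $s-1$ such steps all $s$ residues of some $J_{k+8s-8}(q)$ are missed, forcing the trimmed orbit into the at most $2s$ points not covered by the chains, whence $m\le 2s+2\sum_{u=1}^{s}q_{k+8u-7}$ (Lemma~\ref{lem52}); bad approximability gives $q_{k+8s-7}\le(A+1)^{8s-7}q_k$, so $M=O_{s,A}(q_k)$ while $\length(I_0)<8/q_k$, i.e., $M\cdot\length(I_0)=O(1)$, which is superdensity. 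Because each refinement costs only eight continued-fraction indices and the depth is at most $s-1$, no uniformity over infinitely many scales is ever required — this is the mechanism that removes the digit restriction, and it is absent from your proposal.
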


Theorem~\ref{thm1} is an \textit{if and only if} type result, where one of the two implications is a straightforward corollary of Khinchin's theorem.
Indeed, $1$-direction geodesic flow on a finite polysquare surface \textit{modulo one} becomes $1$-direction geodesic flow on the unit torus $[0,1]^2$, which implies that a superdense geodesic must have a badly approximable slope.
The hard task is to prove the converse, that every badly approximable slope leads to superdensity.

A finite polysquare surface may have singularities.
These then make the system \textit{non-integrable} and the analysis much harder.
A pioneering result in this direction concerns geodesics on a large class of surfaces, which we state below in the special case of finite polysquare surfaces.
This result is time-qualitative in nature, in that it does not give any indication on how long it takes for the geodesic to get within a given distance of a given point in~$\PPP$.
Indeed, the traditional approach from the viewpoint of dynamical systems is based on application of results such as Birkhoff's ergodic theorem which are essentially time-qualitative in nature.
Lacking an error term, they do not appear to lead naturally to time-quantitative statements.

\begin{untheorem}[Katok--Zemlyakov~\cite{KZ75}]
Apart from a countable set of directions, any half-infinite geodesic on a finite polysquare surface $\PPP$ is dense unless it hits a vertex of $\PPP$ and becomes undefined.
\end{untheorem}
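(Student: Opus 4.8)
The plan is to fix a direction, reduce to two claims, and prove them in turn. Fix a slope $\alpha$; the vertical direction may be absorbed into the countable exceptional set, and if $\alpha\in\Qq$ it likewise goes into that set (see below), so I may assume $\alpha$ is a finite, nonzero irrational. Let $\{F_t^\alpha\}$ denote the directional flow on $\PPP$ in the direction of slope $\alpha$, defined away from the finitely many vertices; a half-infinite geodesic of slope $\alpha$ is then a forward orbit of this flow, and it fails to be defined for all $t\ge0$ precisely when it runs into a vertex. Call a geodesic segment of slope $\alpha$ joining a vertex to a vertex (possibly the same one), with no vertex in its interior, a \emph{saddle connection} in direction $\alpha$. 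The proof rests on: (A) the set of slopes admitting a saddle connection is countable, indeed contained in $\Qq$; and (B) if $\PPP$ has no saddle connection in direction $\alpha$, then $\{F_t^\alpha\}$ is minimal, i.e.\ every orbit defined for all $t\ge0$ is dense in $\PPP$. Granting these, every half-infinite geodesic of irrational slope $\alpha$ that avoids the vertices is defined on $[0,\infty)$ and, by (B) applied to $\alpha\notin\Qq$, is dense; the geodesics that do meet a vertex are exactly the ones that become undefined, and the exceptional set of directions — vertical together with the rational slopes — is countable. This is the assertion.

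For (A), observe that $\PPP$, with its unit-square tiling and translation identifications, is a branched covering $\pi\colon\PPP\to\Tt^2=\Rr^2/\Zz^2$ branched over the single point $0$: send each atomic square to $\Tt^2$ by the standard projection; the interior and boundary edge identifications are all by translations, hence compatible with $\pi$, and every vertex of $\PPP$ maps to $0$. A saddle connection in direction $\alpha$ therefore projects to a closed geodesic loop at $0$ on $\Tt^2$, whose holonomy vector lies in $\Zz^2\setminus\{\bzero\}$; hence its slope is rational (or infinite). Thus only countably many directions — a subset of the rational slopes together with the vertical — can carry a saddle connection. Conversely, in any rational direction $\PPP$ decomposes into finitely many metric cylinders whose boundaries run through the vertices along saddle connections, so every rational direction is completely periodic, has no dense orbit, and must genuinely be excluded; this explains why the exceptional set is the (countable) set of rational slopes plus the vertical.

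For (B), I would pass to a Poincar\'e section. Since $\alpha$ is finite and nonzero, the union $\Sigma$ of all horizontal unit edges of the atomic squares (a finite union of transversal arcs after identification) is a global transversal: any trajectory crosses $\Sigma$ within time at most $\sqrt{1+\alpha^{-2}}$. The first-return map $T\colon\Sigma\to\Sigma$ is an interval exchange transformation on finitely many subintervals; its discontinuities are the points whose forward trajectory hits a vertex before returning to $\Sigma$, and the hypothesis that $\PPP$ has no saddle connection in direction $\alpha$ is precisely the statement that the forward $T$-orbit of each discontinuity never meets a discontinuity — the Keane infinite-distinct-orbit condition. (Irreducibility of the exchange also holds, since a reduction would split $\PPP$ in direction $\alpha$ and force a saddle connection.) By Keane's theorem, such an interval exchange is minimal: every infinite forward $T$-orbit is dense in $\Sigma$. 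Now a half-infinite geodesic avoiding the vertices is defined for all $t\ge0$, crosses $\Sigma$ infinitely often with uniformly bounded gaps, and is not periodic (a periodic orbit has holonomy in $\Zz^2$, hence rational slope), so its successive crossings form an infinite forward $T$-orbit with distinct terms, which is dense in $\Sigma$. A standard continuity argument — using that the return time is continuous away from the finitely many discontinuities and that flowing $\Sigma$ for bounded time sweeps out all of $\PPP$ — then upgrades this to density of the geodesic in $\PPP$.

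The main obstacle is Claim (B), and within it the appeal to Keane's theorem (infinite distinct orbit condition $\Rightarrow$ minimality of an interval exchange). For a self-contained treatment one would instead argue directly on $\PPP$: if the closure $X$ of a vertex-avoiding forward orbit were a proper subset of $\PPP$, then its topological boundary $\partial X$ would be a nonempty, closed, flow-invariant set with empty interior; a maximal boundary trajectory cannot be periodic (that would give a cylinder, hence a saddle connection in direction $\alpha$) and cannot be extended indefinitely inside $\partial X$ without a vertex, so it terminates at a vertex on each end, producing a saddle connection in direction $\alpha$ — contradicting the hypothesis and forcing $X=\PPP$. Secondary care is needed in matching the discontinuity set of $T$ with the vertex set of $\PPP$ (a cone point of angle $2\pi k$ contributing $k$ discontinuities), in the continuity step sweeping $\Sigma$ across $\PPP$, and in checking that "defined for all $t\ge0$'' is equivalent to "never hits a vertex,'' so that in a good direction the undefined geodesics are exactly the countably many that meet a vertex.
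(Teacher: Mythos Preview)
The paper does not prove this statement: the Katok--Zemlyakov theorem is quoted as a known background result with a citation to \cite{KZ75}, and no argument for it appears anywhere in the text. There is therefore no proof in the paper against which to compare your proposal.

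On its own merits your outline is the standard one and is essentially correct. Part~(A) is clean: the branched-cover observation $\pi\colon\PPP\to\Tt^2$ pins the saddle-connection directions inside $\Qq\cup\{\infty\}$, which is exactly the countable exceptional set. For Part~(B), the route through Keane's theorem is sound once one accepts that theorem; your identification of ``no saddle connection in direction $\alpha$'' with Keane's infinite-distinct-orbit condition for the first-return interval exchange is correct, and in fact the i.d.o.c.\ already forces irreducibility, so your separate remark on that point is redundant rather than wrong. The alternative direct argument you sketch --- that a trajectory in $\partial X$ must terminate at a vertex --- is closer in spirit to Katok and Zemlyakov's original proof, but your version skips the crux: you assert that a boundary trajectory ``cannot be extended indefinitely inside $\partial X$ without a vertex'' without saying why. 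The usual mechanism is to pass to a minimal subset $M\subset\partial X$, take a short transversal $J$ at a point of $M$, and observe that the first-return map restricted to the closed nowhere-dense set $J\cap M$ is an isometry; this forces either a periodic orbit (impossible for irrational slope on a square-tiled surface) or a gap whose boundary orbits run into singularities, yielding the forbidden saddle connection. If you want the self-contained version to stand on its own, that step needs to be filled in.
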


Recall that an irrational number~$\alpha$, with continued fraction
\begin{equation}\label{eq1.1}
\alpha=[a_0;a_1,a_2,a_3,\ldots]=a_0+\frac{1}{a_1+\frac{1}{a_2+\frac{1}{a_3+\cdots}}},
\end{equation}
is said to be badly approximable if there exists a constant $A$ such that the continued fraction digits $a_i\le A$ for every $i=0,1,2,3,\ldots.$

In a recent series of papers \cite{BDY1,BDY2,BCY1,BCY2}, the authors and their co-authors are able to establish many results concerning the long-term \textit{time-quantitative} behavior in many flat systems.
In particular, a weaker form of Theorem~\ref{thm1} is established, where it is shown that superdensity follows if the slope $\alpha$ given by \eqref{eq1.1} satisfies the additional technical requirement that the digits $a_0,a_1,a_2,a_3,\ldots$ are all integer multiples of the street-LCM of the finite polysquare surface under consideration.
The street-LCM of a finite polysquare surface is the lowest common multiple of the lengths of the horizontal and the vertical streets of the surface.
While this excludes many badly approximable slopes, the method nevertheless gives an uncountable set of slopes which guarantee superdensity.
In Theorem~\ref{thm1}, we remove this technical impediment.

The proof of Theorem~\ref{thm1} here, however, is completely different from our earlier technique.
However, the two different approaches share a common characteristic, in that neither is based on the traditional application of ergodic theory in the earlier study of density and uniformity using traditional techniques in dynamical systems.
Instead, we appeal to a non-ergodic approach, and base our arguments on number theory and combinatorics.

We thus have two methods to prove superdensity.
They are not comparable, and have different advantages.
For instance, the shortline method developed in~\cite{BCY1} works beyond geodesics on polysquare surfaces, and can give superdensity for geodesics on any regular polygon surface.
We do not see how to we can establish such a result with the method of this paper.

Our proof here of Theorem~\ref{thm1} is elementary but not simple.
We therefore start by illustrating the ideas by studying the special case of the L-surface, which is arguably the simplest non-integrable polysquare surface.

The picture on the left in Figure~1.2 shows the L-shape region composed of $3$ unit squares.
Furthermore, it shows the L-graph, which is an undirected planar graph with $8$ vertices, $5$ horizontal edges and $5$ vertical edges.
Using this picture and identifying the edges, we can reduce the number of vertices.
On identifying the edges~$h_1$, we see that $A=G$ and $B=F$.
On identifying the edges~$h_2$, we see that $B=E$ and $C=D$.
On identifying the edges~$v_1$, we see that $A=C$ and $H=D$.
On identifying the edges~$v_2$, we see that $H=E$ and $G=F$.
Thus
\begin{displaymath}
A=G=F=B=E=H=D=C,
\end{displaymath}
so that all the vertices are identified with each other, and we have essentially only $1$ vertex.
This single vertex is a split singularity point of the geodesic flow, explaining why it is a \textit{non-integrable} system; see the two geodesics in the picture on the right in Figure~1.2.

Furthermore, the surface has $3$ faces and, after identification, $6$ edges, so the Euler characteristic is $\chi=1-6+3=-2$.
The genus $g$ is obtained from the well known formula $g=1-(\chi/2)=1+1=2$, and so the surface is homeomorphic to a $2$-holed torus using the classification theorem of closed surfaces.
We call this the \textit{L-surface}.
It is equipped with a flat metric, and the curvature is zero on every square face.
The two geodesics in the picture on the right in Figure~1.2 illustrate why the vertex $E$, and hence every other vertex, is a split singularity of the geodesic flow.
The L-surface is classified as a Riemann surface with a singular point.

\begin{displaymath}
\begin{array}{c}
\includegraphics{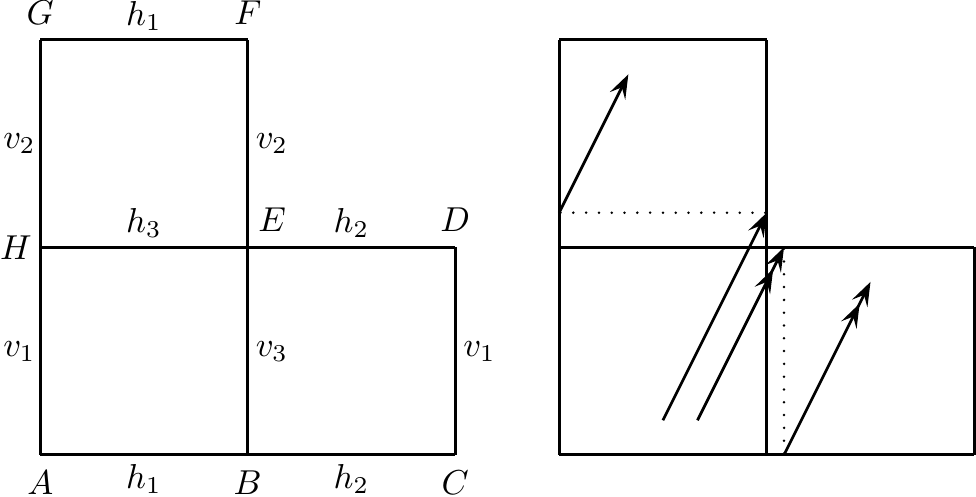}
\\
\mbox{Figure 1.2: the L-surface and two geodesics}
\end{array}
\end{displaymath}

As an analog of half-infinite geodesics on the unit torus $[0,1]^2$, we can consider half-infinite geodesics on the L-surface.
A particle moves on the geodesic with unit speed, so that time equals distance.
If a geodesic on the L-surface has irrational slope and never hits the singular point, then in the L-shape it is represented as a union of infinitely many \textit{parallel} line segments.
A geodesic is uniquely determined by one of its points and its constant velocity vector, just like a geodesic on the unit torus $[0,1]^2$.

We shall first prove the following special case of Theorem~\ref{thm1}.

\begin{thm}\label{thm2}
If a half-infinite geodesic that does not hit a vertex of the L-surface has a slope that is a badly approximable number, then it is superdense on the L-surface.
\end{thm}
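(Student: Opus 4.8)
The plan is to exploit the fact that geodesic flow of slope $\alpha$ on the L-surface, after reduction modulo one in both coordinates, becomes the linear flow of slope $\alpha$ on the unit torus $[0,1]^2$; Khinchin's theorem then already yields $(1/n)$-density after linear time \emph{on the torus}, and the task is to promote this to $(1/n)$-density \emph{on the L-surface}, i.e.\ to show that the geodesic visits not merely every small square at scale $1/n$ but the correct copy of it among the three atomic squares $S_1,S_2,S_3$. After the harmless normalization $0<\alpha<1$, I would first pass to a one-dimensional Poincar\'e section: let $\Sigma$ be a horizontal cross-section meeting each atomic square (concretely, the union of their lower edges), and let $T\colon\Sigma\to\Sigma$ be the first-return map of the flow. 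Since the flow is linear of fixed irrational slope, $T$ is an interval exchange; moreover $T$ is a finite-to-one extension of the circle rotation $R_\beta$ obtained by reducing the horizontal coordinate modulo one, where $\beta=\{1/\alpha\}$ is badly approximable precisely when $\alpha$ is, with continued-fraction denominators essentially the tails of those of $\alpha$. The fibre of this extension records which atomic square the orbit currently occupies, and the dynamics on the fibre is governed by a finite combinatorial cocycle read off from the edge identifications of the L-surface; crucially, the connectedness of the L-surface—reflected in the collapse of all eight vertices of the L-graph to a single split singularity—makes this cocycle \emph{transitive}, so from any atomic square the orbit is combinatorially able to reach any other.

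Next I would bring in the three-distance theorem. For the rotation $R_\beta$ with convergent denominators $q_k$, the first $q_k$ points of any orbit partition the circle into arcs taking at most three distinct lengths, all of order $1/q_k$; hence the orbit segment of combinatorial length $q_k$ is $O(1/q_k)$-dense on the base circle, and since $\alpha$ is badly approximable each return costs bounded flow-time, so this is a geodesic segment of length $O(q_k)$. The heart of the argument is an iterative refinement. Assuming inductively that the geodesic of length $O(q_{k-1})$ has already come $O(1/q_{k-1})$-close to every point of the \emph{whole} L-surface, I would induce $T$ on a subinterval of $\Sigma$ of length $\asymp 1/q_{k-1}$. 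By the standard continued-fraction renormalization, the induced map is again an interval exchange extending a rotation by the one-step Gauss iterate of $\beta$, and the atomic-square cocycle renormalizes along with it. Running this induced system for $q_k/q_{k-1}\le A'+1$ steps—a \emph{bounded} number of induced steps, each an excursion of $O(q_{k-1})$ original steps, hence $O(q_k)$ original steps in all—refines the base density from $1/q_{k-1}$ to $1/q_k$ via the three-distance theorem, while the transitivity of the cocycle, combined with the base density already achieved, forces these excursions to realise \emph{every} atomic square near \emph{every} base point. Thus the geodesic of length $O(q_k)$ is $O(1/q_k)$-dense on the L-surface, completing the induction.

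Finally, given $n\ge1$ I would choose $k$ so that $q_k$ is comparable to $n$ (possible because $q_k\le(A'+1)q_{k-1}$, so the denominators are multiplicatively dense with bounded ratio); then the initial geodesic segment of length $C_1 n$, with $C_1$ a suitable absolute constant, is $(1/n)$-close to every point of the L-surface, which is exactly superdensity. Throughout, the hypothesis that the geodesic never hits the singular point is what keeps $T$ and all its inductions well defined. The step I expect to be the main obstacle is the iterative refinement: one must verify that the continued-fraction renormalization of the slope is genuinely \emph{compatible} with the combinatorial renormalization of the atomic-square cocycle, that the three-distance estimates survive the passage to each induced subinterval with constants that do not deteriorate over the $k$ iterations, and—most delicately—that transitivity of the cocycle is converted into a \emph{quantitative} statement (every atomic square reached within the allotted linear time) rather than a merely qualitative one. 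Controlling this interaction of the singularity with the continued-fraction process, so that the singular point never obstructs density and the total cost remains linear, is the crux of the whole proof.
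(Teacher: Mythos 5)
Your setup (horizontal cross-section, first-return map an interval exchange on a union of three unit intervals which reduces modulo one to a badly approximable rotation, three-distance theorem for density on the base) coincides with the paper's starting point. But the proof has a genuine gap exactly where you say you expect "the main obstacle": the passage from density on the base circle to density on the correct sheet. Transitivity of the atomic-square cocycle is a purely qualitative statement; it tells you the orbit \emph{can} move between squares, but gives no bound whatsoever on \emph{how long} it takes the orbit, near a prescribed base point, to show up in a prescribed square. Your inductive step asserts that running the induced map for $O(A)$ steps (hence $O(q_k)$ original steps) "forces these excursions to realise every atomic square near every base point," but no mechanism is offered for this; it is precisely the statement to be proved, and the base-density hypothesis at scale $1/q_{k-1}$ does not supply it. Indeed the paper's own discussion of discrepancy (slopes $\sqrt2$ versus $1+\sqrt2$ on the L-surface) shows that the distribution among the sheets can behave very differently for different badly approximable slopes, so no soft argument from transitivity plus base density can give the uniform linear-time constant. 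Moreover, the hoped-for compatibility "the atomic-square cocycle renormalizes along with" the continued-fraction renormalization is exactly the point at which the earlier approach in \cite{BCY1} needed the severe digit restriction (digits divisible by the street-LCM); assuming it here without proof assumes away the main difficulty of the theorem.

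The paper's actual argument avoids any induction or renormalization of the cocycle. It argues by contradiction from a hypothetical orbit-free interval $I_0$: it chooses the longest special interval $J_k(q)$ (built from the two-distance special case of the three-distance theorem, with the substantial-overlap property) whose translate fits inside $I_0$, propagates the freeness along the orbit by a $T$-power extension lemma, and then runs a finite case analysis at the nested scales $k$, $k+8$, $k+16$: either an "intersection property" holds, in which case a synchronization/dichotomy argument for the $3$-copy extensions (each long interval $(j,j+1-\alpha)$, $(j+1-\alpha,j+1)$ is either disjoint from the orbit or $1/q_{k+8}$-dense in it) leads to a counting contradiction via the $T$-images of the empty intervals, or one gains a new free translate at the next scale; after at most three rounds this forces $m\le 2q_{k+1}+2q_{k+9}+2q_{k+17}+6$, i.e.\ the orbit is short. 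Combined with $\length(I_0)<8/q_k$ and bad approximability ($q_{k+17}\le(A+1)^{17}q_k$), this yields superdensity directly. So the missing quantitative control over the sheets is supplied not by renormalization but by this pigeonhole-style case analysis; to repair your proposal you would need to replace the appeal to cocycle transitivity by an argument of comparable quantitative force.
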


We can summarize the proof of Theorem~\ref{thm2}, and hence also Theorem~\ref{thm1}, in a nutshell as a \textit{careful} use of the classical tool of continued fractions.

Before we begin our proof of Theorem~\ref{thm2}, however, we make some comments.

First of all, the situation is completely different if we consider uniform distribution instead of density.

Corresponding to superdensity, we can define superuniformity as a best form of time-quantitative uniformity, when the relevant discrepancy is of logarithmic size compared to the length of the geodesic.
For half-infinite geodesics on the unit torus $[0,1]^2$, the result of Khinchin says that a geodesic on the unit torus is superuniform if and only if its slope is a badly approximable number.

This, however, does not remain the case if we consider half-infinite geodesics on the L-surface.
While any half-infinite geodesic with a badly approximable slope is uniformly distributed on the L-surface, the rate of convergence to uniformity can be vastly different for two distinct badly approximable slopes.
For example, it is shown in \cite{BDY1,BDY2} that a half-infinite geodesic of slope $\alpha=\sqrt{2}$ on the L-surface is superuniform, whereas a half-infinite geodesic of slope $\alpha=1+\sqrt{2}$ on the L-surface exhibits discrepancy greater than random square-root size.

Secondly, Theorem~\ref{thm1} has analogs for billiard orbits in finite polysquare regions and for geodesics on surfaces of finite simply-connected polycube regions.
Billiard orbits in finite polysquare regions and geodesics on finite polysquare surfaces are related by the concept of unfolding due to  K\"{o}nig and
Sz\"{u}cs~\cite{KS}, first introduced to the unit square, leading to $4$-fold covering by reflection across a horizontal axis and across a vertical axis.
For an illustration, see also \cite[Section~1.3]{BDY1}.
As to surfaces of simply-connected polycube regions, the simplest example is the surface of the unit cube.
Geodesic flow on such a surface is $4$-direction geodesic flow, and it can be related to $1$-direction geodesic flow on a surface obtained by combining $4$ rotated copies of this surface in a suitable way.
For an illustration, see \cite[Example~7.2.4]{BCY2}.

Finally, it has been drawn to our attention that there is perhaps a possibility of establishing results such as Theorem~\ref{thm1} by using the ideas of Teichm\"{u}ller dynamics, a very powerful tool in dynamical systems.
However, such techniques are beyond the reach of many who are not experts in that area.
Here we have chosen to prove a number-theoretic result by using traditional number-theoretic methods.

%
%

\section{Some prerequisites}\label{sec2}

Without loss of generality, we assume that the slope of the geodesic is greater than~$1$.
Suppose that the geodesic has slope $1/\alpha$, where $0<\alpha<1$ is irrational.

Our first tool is an interval exchange transformation $T=T_{\alpha}$ which encodes the information concerning the particular order with which a geodesic of slope $1/\alpha$ keeps hitting the horizontal edges $h_1,h_2,h_3$.

The first step involves identifying the horizontal edges $h_1,h_2,h_3$ with unit intervals by making use of the correspondences
\begin{displaymath}
h_1=[0,1),
\quad
h_2=[1,2),
\quad
h_3=[2,3),
\end{displaymath}
perhaps somewhat abusing notation, as shown in Figure~2.1.

\begin{displaymath}
\begin{array}{c}
\includegraphics{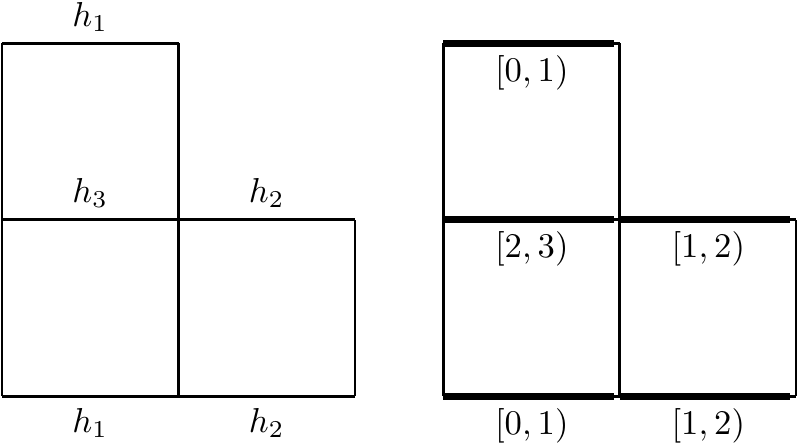}
\\
\mbox{Figure 2.1: representing horizontal edges of the L-surface by intervals}
\end{array}
\end{displaymath}

We now consider the piecewise linear map $T=T_{\alpha}$ defined according to Figure~2.2.

\begin{displaymath}
\begin{array}{c}
\includegraphics{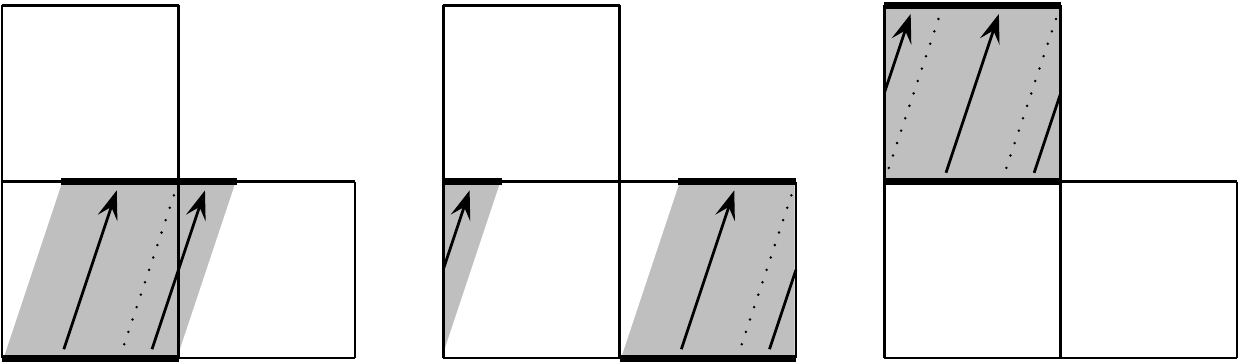}
\\
\mbox{Figure 2.2: the interval exchange transformation $T=T_{\alpha}$}
\end{array}
\end{displaymath}

This is called the interval exchange transformation.
More precisely, we have
\begin{align}
T([0,1-\alpha))=[2+\alpha,3),
&\quad
T([1-\alpha,1))=[1,1+\alpha),
\label{eq2.1}
\\
T([1,2-\alpha))=[1+\alpha,2),
&\quad
T([2-\alpha,2))=[2,2+\alpha),
\label{eq2.2}
\\
T([2,3-\alpha))=[\alpha,1),
&\quad
T([3-\alpha,3))=[0,\alpha),
\label{eq2.3}
\end{align}
where, for instance, $T([0,1-\alpha))=[2+\alpha,3)$ describes the $1/\alpha$ flow mapping the part $[0,1-\alpha)$ in $h_1=[0,1)$ to the part
$[2+\alpha,3)$ in $h_3=[2,3)$ linearly in the form
\begin{displaymath}
Tx=2+\alpha+x,\quad
x\in[0,1-\alpha),
\end{displaymath}
and similarly for the rest in \eqref{eq2.1}--\eqref{eq2.3}.

The novelty is that $T$ acts on the longer interval $[0,3)$ instead of the unit interval $[0,1)$, but if we consider $T$ modulo~$1$, then it acts simply as an $\alpha$-shift, or irrational rotation, in the unit interval.

We next consider our main idea, which involves continued fractions.
Consider an irrational number
\begin{equation}\label{eq2.4}
\alpha=[a_0;a_1,a_2,a_3,\ldots]=a_0+\frac{1}{a_1+\frac{1}{a_2+\frac{1}{a_3+\cdots}}},
\end{equation}
where $a_0\ge0$ and $a_i\ge1$, $i=1,2,3,\ldots,$ are integers.
The rational numbers
\begin{equation}\label{eq2.5}
\frac{p_k}{q_k}=\frac{p_k(\alpha)}{q_k(\alpha)}=[a_0;a_1,\ldots,a_k],
\quad
k=0,1,2,3,\ldots,
\end{equation}
are the $k$-convergents of~$\alpha$.
It is well known that they give rise to the best rational approximations of the irrational number~$\alpha$, and we have
\begin{equation}\label{eq2.6}
\frac{p_0}{q_0}
<\frac{p_2}{q_2}
<\frac{p_4}{q_4}
<\ldots
<\alpha
<\ldots
<\frac{p_5}{q_5}
<\frac{p_3}{q_3}
<\frac{p_1}{q_1}.
\end{equation}

Let $\Vert y\Vert$ denote the distance of a real number $y$ from the nearest integer.
We shall make use of the fact that for an irrational number $\alpha$ the sequence
\begin{displaymath}
\min_{1\le k\le n}\Vert k\alpha\Vert,
\quad
n=1,2,3,\ldots,
\end{displaymath}
is well described by the continued fraction expansion of~$\alpha$.

For every $k=0,1,2,3,\ldots,$ we have
\begin{equation}\label{eq2.7}
\Vert q\alpha\Vert\ge\Vert q_k\alpha\Vert,
\quad
1\le q<q_{k+1},
\end{equation}
\begin{displaymath}
\Vert q_{k+1}\alpha\Vert<\Vert q_k\alpha\Vert,
\end{displaymath}
as well as
\begin{equation}\label{eq2.8}
\frac{1}{q_{k+1}+q_k}\le\Vert q_k\alpha\Vert\le\frac{1}{q_{k+1}}.
\end{equation}
Indeed, the sequences $p_k$ and $q_k$, $k=0,1,2,3,\ldots,$ are given by the initial values
\begin{displaymath}
p_0=a_0,
\quad
p_1=a_1a_0+1,
\quad
q_0=1,
\quad
q_1=a_1,
\end{displaymath}
and the recurrence relations
\begin{equation}\label{eq2.9}
p_{k+1}=a_{k+1}p_k+p_{k-1},
\quad
q_{k+1}=a_{k+1}q_k+q_{k-1},
\quad
k\ge1.
\end{equation}
We also have
\begin{displaymath}
p_{k-1}q_k-q_{k-1}p_k=(-1)^k,
\quad
k\ge1.
\end{displaymath}
On the other hand, using \eqref{eq2.6} and \eqref{eq2.9}, it is easy to show that
\begin{equation}\label{eq2.10}
\Vert q_{k+1}\alpha\Vert+a_{k+1}\Vert q_k\alpha\Vert=\Vert q_{k-1}\alpha\Vert.
\end{equation}

We need the following result.

\begin{untheorem}[$3$-distance theorem]
Consider the $n+1$ numbers $0,\alpha,2\alpha,3\alpha,\ldots,n\alpha$ modulo~$1$ in the unit torus/circle $[0,1)$, leading to an $(n+1)$-partition.
This partition exhibits at most $3$ different distances between consecutive points.
Furthermore, every positive integer $n$ can be expressed uniquely in the form
\begin{displaymath}
n=\mu q_k+q_{k-1}+r,
\quad
\mbox{with $1\le \mu\le a_{k+1}$ and $0\le r<q_k$},
\end{displaymath}
in terms of the continued fraction \eqref{eq2.4} of~$\alpha$ and its convergents \eqref{eq2.5}, with the convention that $q_{-1}=0$.
Then
\begin{itemize}
\item[(i)] the distance $\Vert q_k\alpha\Vert$ shows up precisely $n+1-q_k$ times;
\item[(ii)] the distance $\Vert q_{k-1}\alpha\Vert-\mu\Vert q_k\alpha\Vert$ shows up precisely $r+1$ times; and
\item[(iii)] the distance $\Vert q_{k-1}\alpha\Vert-(\mu-1)\Vert q_k\alpha\Vert$ shows up precisely $q_k-r-1$ times.
\end{itemize}
\end{untheorem}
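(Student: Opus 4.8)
The plan is to prove the $3$-distance theorem by the classical combination of a Euclidean-type division with the continued fraction algorithm, using only the facts \eqref{eq2.6}--\eqref{eq2.10} recorded above, followed by an induction on $n$.

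The uniqueness of the representation $n=\mu q_k+q_{k-1}+r$ is elementary. By the recurrences \eqref{eq2.9}, the half-open intervals $[\,q_k+q_{k-1},\,q_{k+1}+q_k)$ for $k=0,1,2,\ldots$ (with the convention $q_{-1}=0$) are consecutive and exhaust $[1,\infty)$, so every integer $n\ge1$ lies in exactly one of them; this fixes $k$. Writing $q_{k+1}=a_{k+1}q_k+q_{k-1}$, the quantity $n-q_{k-1}$ then lies in $[\,q_k,(a_{k+1}+1)q_k)$, and dividing it by $q_k$ produces the unique $\mu\in\{1,\ldots,a_{k+1}\}$ and $r\in\{0,\ldots,q_k-1\}$. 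Here the block $k=0$ is the trivial two-gap regime $n=1,\ldots,a_1$, in which the symbol $\Vert q_{-1}\alpha\Vert$ in parts (ii) and (iii) is to be read as the full circumference $1$; we dispose of this case by inspection and assume $k\ge1$ thereafter.

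For the partition statement I would induct on $n$, carrying as inductive hypothesis not merely (i)--(iii) but the finer \emph{combinatorial} description of the partition of $[0,1)$ by $\{P_0,\ldots,P_n\}$, where $P_j=\{j\alpha\}$: for $n$ in block $k$ with parameters $(\mu,r)$, the clockwise neighbor of $P_j$ is obtained from the index $j$ by adding or subtracting one of a bounded set of ``return times'' built from $q_k$ and $q_{k-1}$ (principally $q_k$ and $\mu q_k+q_{k-1}$), together with the matching gap lengths $\Vert q_k\alpha\Vert$ and $\Vert q_{k-1}\alpha\Vert-(\mu-1)\Vert q_k\alpha\Vert$, the third length $\Vert q_{k-1}\alpha\Vert-\mu\Vert q_k\alpha\Vert$ arising as their difference. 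What pins these neighbors down is the sign pattern implicit in \eqref{eq2.6} --- namely that $\{q_k\alpha\}$ sits just above $0$ for odd $k$ and just below $1$ for even $k$ --- in tandem with the minimality \eqref{eq2.7} of $\Vert q_k\alpha\Vert$ among $\Vert q\alpha\Vert$ for $1\le q<q_{k+1}$. The base case is the block $k=0$: the points $0,\alpha,\ldots,a_1\alpha$ are equally spaced $\Vert\alpha\Vert$ apart except for one shorter closing gap, which is checked directly.

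In the inductive step one inserts the single point $P_{n+1}$, splitting exactly one gap into two, and there are three cases according to how $(\mu,r)$ advances: $(\mu,r)\mapsto(\mu,r+1)$ when $r<q_k-1$; $(\mu,q_k-1)\mapsto(\mu+1,0)$ when $\mu<a_{k+1}$; and $(a_{k+1},q_k-1)\mapsto$ (block $k+1$, parameters $(1,0)$) at a block boundary, where one uses \eqref{eq2.10} in the form $\Vert q_{k+1}\alpha\Vert=\Vert q_{k-1}\alpha\Vert-a_{k+1}\Vert q_k\alpha\Vert$ to reconcile the two descriptions. In each case one must (a) locate the gap containing $P_{n+1}$, (b) check that it is among the currently longest gaps, and (c) verify that $P_{n+1}$ cuts it into a piece of length $\Vert q_k\alpha\Vert$ and the complementary piece, and then update the three multiplicities $n+1-q_k$, $r+1$, $q_k-r-1$ accordingly. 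Step (a) is the crux: it amounts to showing that $(n+1)\alpha\bmod1$ lands exactly in the image of a longest gap under the relevant return map, and this is precisely where \eqref{eq2.7}--\eqref{eq2.8} and the parity bookkeeping do the real work, the rest being routine arithmetic with \eqref{eq2.9}--\eqref{eq2.10}. The qualitative ``at most three distances'' assertion, and the fact that in the three-length case the longest gap is the sum of the two shorter ones, then follow at once, since $\Vert q_{k-1}\alpha\Vert-(\mu-1)\Vert q_k\alpha\Vert=\Vert q_k\alpha\Vert+\bigl(\Vert q_{k-1}\alpha\Vert-\mu\Vert q_k\alpha\Vert\bigr)$. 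The main obstacle is not any single computation but the need to thread the correct combinatorial invariant --- the precise neighbor structure of the partition --- through the induction; once it is formulated correctly, the three cases of the step are short. (A cleaner alternative is the lattice-point proof of the three-gap theorem, counting the $\{j\alpha\}$ via the geometry of $\Zz^2$, but the continued fraction argument is the one that meshes with the tools already assembled in this section.)
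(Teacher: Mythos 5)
This statement is one the paper does not prove at all: it is quoted as a classical theorem (Steinhaus's conjecture) with the proofs attributed to S\'{o}s, Swierczkowski, Sur\'{a}nyi, Halton and Slater, so there is no internal argument to compare yours against. Judged on its own, your proposal is an outline of the standard continued-fraction proof, and its elementary part is fine: the uniqueness of $n=\mu q_k+q_{k-1}+r$ via the consecutive blocks $[q_k+q_{k-1},q_{k+1}+q_k)$ is correct, as is the bookkeeping showing that each case of the inductive step is consistent with the multiplicities $n+1-q_k$, $r+1$, $q_k-r-1$ and with \eqref{eq2.10} at block boundaries.

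The gap is that the entire content of the theorem sits in the step you yourself label the crux and then do not carry out: establishing the inductive invariant that identifies the circle-neighbors of each point $P_j$ (equivalently, locating exactly which gap the new point $P_{n+1}$ splits, and showing it is a gap of the longest current length that gets cut into a piece of length $\Vert q_k\alpha\Vert$ plus the remainder). Saying that ``\eqref{eq2.7}--\eqref{eq2.8} and the parity bookkeeping do the real work'' is a promissory note, not an argument; without it, parts (i)--(iii) are verified only to be \emph{consistent} with the insertion process, not to actually hold. Moreover, the one concrete piece of the invariant you do state is garbled: the gap between neighbors $P_j$ and $P_{j+t}$ has length $\Vert t\alpha\Vert$, and since $\Vert(\mu q_k+q_{k-1})\alpha\Vert=\Vert q_{k-1}\alpha\Vert-\mu\Vert q_k\alpha\Vert$, the return time $\mu q_k+q_{k-1}$ belongs to the \emph{shortest} length in (ii), while the long length $\Vert q_{k-1}\alpha\Vert-(\mu-1)\Vert q_k\alpha\Vert$ in (iii) corresponds to the index difference $(\mu-1)q_k+q_{k-1}$, not to $\mu q_k+q_{k-1}$ as you pair them. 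This off-by-one is exactly the kind of detail the missing invariant is supposed to pin down, so as written the proposal is a plausible plan for the known proof rather than a proof.
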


This surprising geometric fact, formulated as a conjecture by Steinhaus, has many proofs, by S\'{o}s~\cite{So1,So2}, Swierczkowski~\cite{Sw},
Sur\'{a}nyi~\cite{Su}, Halton~\cite{H} and Slater~\cite{Sl}, with others published more recently.

%
%

\section{Proof of Theorem~\ref{thm2}}\label{sec3}

Given an integer $k\ge1$, let $\AAA_k(\alpha)$ denote the partition of the unit torus/circle $[0,1)$ with $q_{k+1}=q_{k+1}(\alpha)$ division points
$\{q\alpha\}$, $-1\le q\le q_{k+1}-2$, where $\{x\}$ denotes the fractional part of a real number~$x$.
Note that the choices $q=-1,0$ in $\{q\alpha\}$ represent the \textit{dangerous} endpoints of the special intervals $[0,1-\alpha)$ and $[1-\alpha,1)$ in \eqref{eq2.1}.
These are the two \textit{singularities} of the interval exchange transformation $T$ restricted to the interval $0\le x<1$, in the sense that both $0$ and $T(1-\alpha)=1$ represent the split singularity of the L-surface.

A consequence of the special choice $n=q_{k+1}-1$ is that the $3$-distance theorem simplifies to a $2$-distance theorem.
This in turn leads to some very useful information concerning the distances between the consecutive points of the $q_{k+1}$-partition $\AAA_k(\alpha)$ of the unit torus/circle $[0,1)$.
Indeed, using the second recurrence relation in \eqref{eq2.9}, we have
\begin{displaymath}
n=q_{k+1}-1=a_{k+1}q_k+q_{k-1}-1=\mu q_k+q_{k-1}+r,
\end{displaymath}
with $\mu=a_{k+1}-1$ and $r=q_k-1$.
Since $q_k-r-1=0$, it follows from the $3$-distance theorem that there are only two distances
\begin{equation}\label{eq3.1}
\Vert q_k\alpha\Vert
\quad\mbox{and}\quad
\Vert q_{k-1}\alpha\Vert-(a_{k+1}-1)\Vert q_k\alpha\Vert=\Vert q_{k+1}\alpha\Vert+\Vert q_k\alpha\Vert,
\end{equation}
in view of \eqref{eq2.10}.

It follows immediately from \eqref{eq2.7} that one of the neighbors of $0$ in the partition $\AAA_k(\alpha)$ is $\{q_k\alpha\}$ which clearly has distance
$\Vert q_k\alpha\Vert$ from $0$ in the unit torus/circle.
Since $\alpha$ is irrational, the other neighbor of $0$ in the partition $\AAA_k(\alpha)$ must have distance
$\Vert q_{k+1}\alpha\Vert+\Vert q_k\alpha\Vert$ from $0$ in the unit torus/circle.
Simple calculation then shows that it is $\{((a_{k+1}-1)q_k+q_{k-1})\alpha\}$.
Thus the two neighbors
\begin{displaymath}
\{q_k\alpha\}
\quad\mbox{and}\quad
\{((a_{k+1}-1)q_k+q_{k-1})\alpha\}
\end{displaymath}
of $0$ in the partition $\AAA_k(\alpha)$ exhibit the two gaps in \eqref{eq3.1} in some order.
Similarly, the two neighbors
\begin{displaymath}
\{(q_k-1)\alpha\}
\quad\mbox{and}\quad
\{((a_{k+1}-1)q_k+q_{k-1}-1)\alpha\}
\end{displaymath}
of $1-\alpha=\{-\alpha\}$ in the partition $\AAA_k(\alpha)$ exhibit the same two gaps in \eqref{eq3.1} in the same order.

The union of the left and right neighborhoods of $0$ in the partition $\AAA_k(\alpha)$ has the form
\begin{equation}\label{eq3.2}
B(0)=(-d^{\ast},d^{\ast\ast}),
\end{equation}
and the union of the left and right neighborhoods of $1-\alpha =\{-\alpha\}$ in the partition $\AAA_k(\alpha)$ has a similar form
\begin{equation}\label{eq3.3}
B(-1)=(1-\alpha-d^{\ast},1-\alpha+d^{\ast\ast}),
\end{equation}
due to the same order, where
\begin{equation}\label{eq3.4}
\{d^{\ast},d^{\ast\ast}\}=\{\Vert q_k\alpha\Vert,\Vert q_{k+1}\alpha\Vert+\Vert q_k\alpha\Vert\},
\end{equation}
but we have not specified which one is which.
We refer to $B(0)$ and $B(-1)$ as the buffer zones of the singularities $0$ and $1-\alpha$ respectively in the partition $\AAA_k(\alpha)$.

We consider the special intervals
\begin{equation}\label{eq3.5}
J_k(q)=J(\alpha;k;q)=[\{q\alpha\}-d^{\ast\ast},\{q\alpha\}+d^{\ast}),
\quad
1\le q\le q_{k+1}-2.
\end{equation}
Note that these short special intervals have three crucial properties: 

(i) They completely cover the two long intervals $(0,1-\alpha)$ and $(1-\alpha,1)$.

(ii) They avoid the singularities $0,1$ and $1-\alpha$, in view of \eqref{eq3.2}--\eqref{eq3.4}.

(iii) Any two intervals in $(0,1-\alpha)$ or in $(1-\alpha,1)$ arising from neighboring partition points exhibit \textit{substantial overlapping}.
More precisely, if $1\le q',q''\le q_{k+1}-2$ are two integers such that $\{q'\alpha\}$ and $\{q''\alpha\}$ are neighboring points in the partition
$\AAA_k(\alpha)$, so that both points are in the interval $(0,1-\alpha )$ or both points are in the interval $(1-\alpha ,1)$, then
\begin{equation}\label{eq3.6}
\length(J_k(q')\cap J_k(q''))\ge\min\{d^{\ast},d^{\ast\ast}\}=\Vert q_k\alpha\Vert.
\end{equation}
Since the length of $J_k(q)$ is $2\Vert q_k\alpha\Vert+\Vert q_{k+1}\alpha\Vert$, the trivial upper bound
\begin{displaymath}
2\Vert q_k\alpha\Vert+\Vert q_{k+1}\alpha\Vert<3\Vert q_k\alpha\Vert
\end{displaymath}
and \eqref{eq3.6} together justify the term \textit{substantial overlapping}.

Since $T$ acts on the interval/circle $[0,3)$, for every interval $J_k(q)$, $1\le q\le q_{k+1}-2$, given by \eqref{eq3.5}, we define its \textit{$3$-copy extension} $J_k(q;3)$ by
\begin{equation}\label{eq3.7}
J_k(q;3)=J_k(q)\cup(1+J_k(q))\cup(2+J_k(q))\subset[0,3),
\end{equation}
a union of $J_k(q)$ with two of its translates.

After our preparation, we are now ready to study an orbit.
Let $\LLL_{\alpha}(S;t)$, $t\ge0$, be a parametrized half-infinite geodesic with initial point $S$ and slope $1/\alpha$, under the usual arc-length parametrization.

Let $M$ be \textit{large}, and consider the initial segment $\LLL_{\alpha}(S;t)$, $0\le t\le M$, of length~$M$, which we denote by $(\LLL_{\alpha};M)$.
Suppose that
\begin{equation}\label{eq3.8}
0\le t_1<t_2<t_3<\ldots<t_m\le M,
\end{equation}
where
\begin{equation}\label{eq3.9}
t_{i+1}-t_i=\sqrt{1+\alpha^2},
\quad
1\le i\le m-1,
\end{equation}
is the sequence of time instances $t$ when the initial segment $\LLL_{\alpha}(S;t)$, $0\le t\le M$, intersects the union $h_1\cup h_2\cup h_3=[0,3)$ of the $3$ horizontal edges of the L-surface in Figure~2.1.
For notational simplicity let
\begin{equation}\label{eq3.10}
y_i=\LLL_{\alpha}(S;t_i)\in[0,3),
\quad
1\le i\le m,
\end{equation}
denote these intersection points.

Using the interval exchange transformation $T=T_{\alpha}:[0,3)\to[0,3)$, we see that any two time-consecutive intersection points are governed by the simple relation
\begin{equation}\label{eq3.11}
T(y_i)=y_{i+1},
\quad
1\le i\le m-1.
\end{equation}

\begin{lem}\label{lem30}
Suppose that $J_k(\ell_1)$ is a special interval of the form \eqref{eq3.5}, and there exists $r(\ell_1)\in\{0,1,2\}$ such that
\begin{equation}\label{eq3.12}
\{y_i:1\le i\le m\}\cap(r(\ell_1)+J_k(\ell_1))=\emptyset,
\end{equation}
where $\{y_i:1\le i\le m\}$ is the set of intersection points defined in \eqref{eq3.10}.
Then for every integer $1-\ell_1\le h\le q_{k+1}-2-\ell_1$, we have
\begin{equation}\label{eq3.13}
\{y_i:q_{k+1}\le i\le m-q_{k+1}\}\cap T^h(r(\ell_1)+J_k(\ell_1))=\emptyset.
\end{equation}
\end{lem}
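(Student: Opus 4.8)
The plan is to give a short argument using only two facts: that $T\colon[0,3)\to[0,3)$ is a \emph{bijection} — being an interval exchange transformation, it merely permutes the six subintervals appearing in \eqref{eq2.1}--\eqref{eq2.3}, so $T^h$ is a well-defined bijection of $[0,3)$ for every $h\in\Zz$ — and the orbit relation \eqref{eq3.11}, namely $T(y_i)=y_{i+1}$ for $1\le i\le m-1$, which iterates in the obvious way to $T^h(y_j)=y_{j+h}$ as long as the relevant indices stay within $\{1,\dots,m\}$.

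First I would fix an integer $h$ with $1-\ell_1\le h\le q_{k+1}-2-\ell_1$ and an index $i$ with $q_{k+1}\le i\le m-q_{k+1}$, and check that $i-h\in\{1,\dots,m\}$, so that $y_{i-h}$ is one of the intersection points listed in \eqref{eq3.10} and, by the iterated orbit relation, $y_i=T^h(y_{i-h})$. Since $T^h$ is injective on $[0,3)$, the hypothesis \eqref{eq3.12} that $y_{i-h}\notin r(\ell_1)+J_k(\ell_1)$ immediately gives $y_i=T^h(y_{i-h})\notin T^h(r(\ell_1)+J_k(\ell_1))$. As $i$ ranges over $q_{k+1}\le i\le m-q_{k+1}$, this is precisely \eqref{eq3.13}. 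Equivalently one may argue by contradiction: if some such $y_i$ lay in $T^h(r(\ell_1)+J_k(\ell_1))$, then $y_{i-h}=T^{-h}(y_i)$ would lie in $r(\ell_1)+J_k(\ell_1)$, contradicting \eqref{eq3.12}.

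The one point requiring care — and the reason \eqref{eq3.13} is restricted to the ``interior'' intersection points $q_{k+1}\le i\le m-q_{k+1}$ rather than all of them — is the verification that $i-h$ and the intermediate indices stay in $\{1,\dots,m\}$, so that \eqref{eq3.11} may legitimately be iterated. This rests on the crude bound $|h|\le q_{k+1}-3$, which follows at once from $1\le\ell_1\le q_{k+1}-2$ together with $1-\ell_1\le h\le q_{k+1}-2-\ell_1$: when $h\ge0$ one gets $i-h\ge q_{k+1}-(q_{k+1}-3)=3$, and when $h<0$ one gets $i-h\le(m-q_{k+1})+(q_{k+1}-3)=m-3$, so in fact $3\le i-h\le m-3$ in every case, comfortably inside $\{1,\dots,m\}$.

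Thus there is no genuine analytic obstacle here; the lemma is a ``transport of avoidance'' statement, saying that an orbit missing a single copy $r(\ell_1)+J_k(\ell_1)$ of a special interval automatically misses the whole window $\{T^h(r(\ell_1)+J_k(\ell_1)):1-\ell_1\le h\le q_{k+1}-2-\ell_1\}$ of its $T$-translates, once the first and last $q_{k+1}$ intersection points are discarded. The special-interval structure of $r(\ell_1)+J_k(\ell_1)$ plays no role in this particular argument — only its being a subset of $[0,3)$ matters — but it is exactly what will make the conclusion useful in the iteration to follow.
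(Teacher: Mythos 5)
Your proof is correct and is essentially the paper's argument: both rest on the invertibility of the interval exchange $T$ and the orbit relation \eqref{eq3.11}, with the same index bookkeeping (the bound $|h|\le q_{k+1}-3$ coming from $1\le\ell_1\le q_{k+1}-2$); the paper transports the whole avoided set forward/backward step by step, while you pull back each individual point $y_i$ with $q_{k+1}\le i\le m-q_{k+1}$ to $y_{i-h}$, which is the same reasoning organized per point. No gaps.
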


\begin{proof}
Since $q_{k+1}\ge1$ and $m-q_{k+1}\le m$, it follows trivially from \eqref{eq3.12} that \eqref{eq3.13} holds for $h=0$.

Combining \eqref{eq3.11} and \eqref{eq3.12}, we see that
\begin{displaymath}
\{y_i:2\le i\le m\}\cap T(r(\ell_1)+J_k(\ell_1))=\emptyset.
\end{displaymath}
Iterating this argument, we see that for every integer $1\le h\le q_{k+1}-2-\ell_1$, we have 
\begin{displaymath}
\{y_i:1+h\le i\le m\}\cap T^h(r(\ell_1)+J_k(\ell_1))=\emptyset.
\end{displaymath}
Since $q_{k+1}\ge q_{k+1}-1-\ell_1\ge h+1$ and $m-q_{k+1}\le m$, it follows that \eqref{eq3.13} holds for every integer $1\le h\le q_{k+1}-2-\ell_1$.

For every negative integer $1-\ell_1\le h\le-1$, using the inverse transformation $T^{-1}$, combining \eqref{eq3.11} and \eqref{eq3.12}, and iterating, we have
\begin{displaymath}
\{y_i:1\le i\le m+h\}\cap T^h(r(\ell_1)+J_k(\ell_1))=\emptyset.
\end{displaymath}
Since $q_{k+1}\ge1$ and $m+h\ge m+1-\ell_1\ge m+1-q_{k+1}\ge m-q_{k+1}$, it follows that \eqref{eq3.13} holds for every integer $1-\ell_1\le h\le-1$.

The proof of the lemma is now complete.
\end{proof}

\begin{remark}
We often refer to the deduction of \eqref{eq3.13} from \eqref{eq3.12} as a $T$-power extension argument.
\end{remark}

For notational convenience, for every integer $1-\ell_1\le h\le q_{k+1}-2-\ell_1$, we write
\begin{equation}\label{eq3.14}
T^h(r(\ell_1)+J_k(\ell_1))=r(\ell_1+h)+J_k(\ell_1+h).
\end{equation}
Note that \eqref{eq3.14} defines $r(q)$ for every integer $1\le q\le q_{k+1}-2$.
Furthermore, combining \eqref{eq3.13} and \eqref{eq3.14}, we have
\begin{equation}\label{eq3.15}
\{y_i:q_{k+1}\le i\le m-q_{k+1}\}\cap(r(q)+J_k(q))=\emptyset
\end{equation}
for every integer $1\le q\le q_{k+1}-2$.

Suppose that  $I_0\subset[0,3)$ is $(\LLL_\alpha;M)$-free, so that
\begin{displaymath}
\{y_i:1\le i\le m\}\cap I_0=\emptyset,
\end{displaymath}
where $\{y_i:1\le i\le m\}$ is the set of intersection points defined in \eqref{eq3.10}.
Let $k$ be an integer, and suppose that $J_k(\ell_1)$ is a special interval of the form \eqref{eq3.5}, and there exists $r(\ell_1)\in\{0,1,2\}$ such that
\begin{equation}\label{eq3.16}
r(\ell_1)+J_k(\ell_1)\subset I_0.
\end{equation}
Then \eqref{eq3.12} holds.

\begin{remark}
We shall later choose an \textit{optimal} value of $k$ for which \eqref{eq3.16} holds.
\end{remark}

We distinguish a few cases according to the special relations between various sets of intersection points and various special intervals.
We take advantage of the substantial overlapping of the short special intervals $J_k(q)$ defined by \eqref{eq3.5}.

Recall that if $1\le q',q''\le q_{k+1}-2$ are two integers such that $\{q'\alpha\}$ and $\{q''\alpha\}$ are neighboring points in the partition
$\AAA_k(\alpha)$, so that both points are in the interval $(0,1-\alpha )$ or both points are in the interval $(1-\alpha ,1)$, then combining \eqref{eq2.8} and \eqref{eq3.6}, we have
\begin{displaymath}
\length(J_k(q')\cap J_k(q''))
\ge\Vert q_k\alpha\Vert
\ge\frac{1}{q_{k+1}+q_k}
>\frac{1}{2q_{k+1}}.
\end{displaymath}
Recall also from \eqref{eq2.8}, \eqref{eq3.4} and \eqref{eq3.5} that
\begin{displaymath}
\length(J_k(q))
=2\Vert q_k\alpha\Vert+\Vert q_{k+1}\alpha\Vert
<3\Vert q_k\alpha\Vert
\le\frac{3}{q_{k+1}},
\end{displaymath}
so that
\begin{equation}\label{eq3.17}
\length(J_{k+8}(q))<\frac{3}{q_{k+9}}.
\end{equation}
On the other hand, a trivial deduction from \eqref{eq2.9} gives
\begin{displaymath}
q_{k+2}
=a_{k+2}q_{k+1}+q_k
\ge q_{k+1}+q_k
=a_{k+1}q_k+q_{k-1}+q_k
\ge2q_k,
\end{displaymath}
so that iterating this a few times, we conclude that
\begin{equation}\label{eq3.18}
q_{k+9}\ge2q_{k+7}\ge4q_{k+5}\ge8q_{k+3}\ge16q_{k+1}.
\end{equation}
Combining \eqref{eq3.17} and \eqref{eq3.18}, we conclude that the intersection $J_k(q')\cap J_k(q'')$ must contain a special interval of the type
$J_{k+8}(q)$ for some $1\le q\le q_{k+9}-2$.
We split the argument into two complementary cases.

\begin{case1a}
The following \textit{intersection property} holds.
For every
\begin{displaymath}
J_{k+8}(\ell)\subset J_k(q)
\quad\mbox{and}\quad
r\in\{0,1,2\},
\end{displaymath}
with $r\ne r(q)$ given by \eqref{eq3.14}, we have
\begin{equation}\label{eq3.19}
\{y_i:q_{k+1}\le i\le m-q_{k+1}\}\cap(r+J_{k+8}(\ell))\ne\emptyset.
\end{equation}
\end{case1a}

\begin{lem}\label{lem31}
Case~1A is impossible.
\end{lem}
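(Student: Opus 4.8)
The plan is to argue by contradiction: assume the intersection property of Case~1A holds, and deduce that the geodesic segment $(\LLL_\alpha;M)$ cannot avoid the interval $I_0$, contradicting the hypothesis that $I_0$ is $(\LLL_\alpha;M)$-free. The starting observation is that by \eqref{eq3.16} we have $r(\ell_1)+J_k(\ell_1)\subset I_0$, hence \eqref{eq3.12} holds, and then by the $T$-power extension argument of Lemma~\ref{lem30}, combined with \eqref{eq3.14}--\eqref{eq3.15}, we know that $\{y_i:q_{k+1}\le i\le m-q_{k+1}\}$ avoids the translate $r(q)+J_k(q)$ for \emph{every} $1\le q\le q_{k+1}-2$. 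First I would exploit the substantial overlapping established just before the statement: every intersection $J_k(q')\cap J_k(q'')$ of special intervals coming from neighboring partition points of $\AAA_k(\alpha)$ contains some $J_{k+8}(\ell)$, because of the length comparison \eqref{eq3.17} against the gap lower bound and the growth estimate \eqref{eq3.18}, $q_{k+9}\ge 16q_{k+1}$.

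The heart of the argument is a \emph{connectedness / propagation} step. Since the special intervals $J_k(q)$, $1\le q\le q_{k+1}-2$, completely cover the two long intervals $(0,1-\alpha)$ and $(1-\alpha,1)$ (property~(i) of \eqref{eq3.5}) and neighboring ones overlap substantially, I would show that the assignment $q\mapsto r(q)\in\{0,1,2\}$ is forced to be \emph{locally constant}: if $\{q'\alpha\}$ and $\{q''\alpha\}$ are neighboring partition points lying in the same long interval, then $r(q')=r(q'')$. Indeed, pick $J_{k+8}(\ell)\subset J_k(q')\cap J_k(q'')$; then $r(q')+J_{k+8}(\ell)\subset r(q')+J_k(q')$ is avoided by $\{y_i:q_{k+1}\le i\le m-q_{k+1}\}$ by \eqref{eq3.15}, and likewise $r(q'')+J_{k+8}(\ell)$ is avoided. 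If $r(q')\ne r(q'')$, then for the value $r:=r(q')$, which is $\ne r(q'')$, the set $\{y_i:q_{k+1}\le i\le m-q_{k+1}\}$ avoids $r+J_{k+8}(\ell)\subset r+J_k(q'')$ — but this directly contradicts the intersection property \eqref{eq3.19} applied with $q=q''$. Hence $r(\cdot)$ is constant on the partition points within each of the two long intervals; since the $J_k(q)$ cover $(0,1-\alpha)$ and $(1-\alpha,1)$, there are constants $r^{\ast},r^{\ast\ast}\in\{0,1,2\}$ with $r(q)=r^\ast$ for all $q$ landing in $(0,1-\alpha)$ and $r(q)=r^{\ast\ast}$ for all $q$ landing in $(1-\alpha,1)$.

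From here I would harvest the contradiction. Covering property~(i) together with the conclusion that $r(\cdot)$ is constant on each long interval says that $\{y_i:q_{k+1}\le i\le m-q_{k+1}\}$, projected modulo nothing (these are already points of $[0,3)$), avoids the whole set $\bigl(r^\ast+(0,1-\alpha)\bigr)\cup\bigl(r^{\ast\ast}+(1-\alpha,1)\bigr)$, up to the finitely many singular points $0,1,1-\alpha$ and their translates, which the $J_k(q)$ were arranged to avoid (property~(ii)). But $\{0,1-\alpha\}$ partitions the unit interval into exactly the two pieces that the interval exchange $T$ permutes, and the middle portion $\{y_i:q_{k+1}\le i\le m-q_{k+1}\}$ still contains $\Theta(M)$ points once $M$ is large relative to $q_{k+1}$; by the interval exchange relation \eqref{eq3.11} these $y_i$ form a $T$-orbit segment, and no $T$-orbit segment of length exceeding $q_{k+1}$ can miss an entire translated copy of $(0,1-\alpha)$ or $(1-\alpha,1)$ — indeed the $T$-image of any $y_i\in[0,3)$ lands in one of these long intervals (shifted by $0,1,$ or $2$), so consecutive $y_i$ cannot all dodge a fixed long sub-interval of positive length. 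This forces $m-q_{k+1}<q_{k+1}$, i.e. $m<2q_{k+1}$, which fails once $M$ is chosen large enough (the number $m$ of crossings grows linearly in $M$ while $q_{k+1}$ is fixed). That contradiction proves Case~1A is impossible.

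I expect the main obstacle to be the bookkeeping in the propagation step: making precise that neighboring special intervals $J_k(q')$, $J_k(q'')$ whose partition points lie in the same long interval really do share a common $J_{k+8}(\ell)$ \emph{with the relevant $r$-shift}, and that chaining these overlaps across the $q_{k+1}-2$ partition points — which themselves, by the $3$-distance theorem specialized to $n=q_{k+1}-1$, tile the two long intervals cleanly — transports the single value $r(\ell_1)$ consistently to all of them without the "wrong" residue $r\ne r(q)$ ever becoming available to contradict \eqref{eq3.19}. One must also be careful that the index range $q_{k+1}\le i\le m-q_{k+1}$ in \eqref{eq3.15} is nonempty and long, which is exactly where the largeness of $M$ enters and where the final contradiction is extracted.
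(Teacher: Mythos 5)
Your propagation step is sound and reproduces, in a slightly different form, the first half of the paper's argument: from $J_{k+8}(\ell)\subset J_k(q')\cap J_k(q'')$, \eqref{eq3.15}, and the Case~1A intersection property applied with $q=q''$ and $r=r(q')$, you correctly force $r(q')=r(q'')$ for neighboring partition points, and hence, by chaining along the partition and using the covering property of the $J_k(q)$, that $Q(k;m)=\{y_i:q_{k+1}\le i\le m-q_{k+1}\}$ is disjoint from the two long translates $\III_1=r^{\ast}+(0,1-\alpha)$ and $\III_2=r^{\ast\ast}+(1-\alpha,1)$. This is exactly the paper's intermediate conclusion (its ``synchronization'' step together with the identification of the two avoided long intervals).

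The gap is in your final step. The assertion that ``no $T$-orbit segment of length exceeding $q_{k+1}$ can miss an entire translated copy of $(0,1-\alpha)$ or $(1-\alpha,1)$'' is unproved, and the justification you give --- that the $T$-image of each $y_i$ lands in some long interval shifted by $0$, $1$ or $2$ --- is a non sequitur: the orbit could persistently land in the translates \emph{other} than $r^{\ast}$ and $r^{\ast\ast}$. Controlling \emph{which} of the three vertical copies the orbit visits (its mod-$1$ projection is of course dense in $(0,1-\alpha)\cup(1-\alpha,1)$ very quickly) is precisely the difficulty of this lemma and of the whole theorem; if such a hitting statement were available for free, the Case 1A/1B/2A/2B machinery would be pointless, and note that in Case~2B the orbit really does avoid chosen translates of (shorter) special intervals for stretches of order $q_{k+17}$, so blanket avoidance-time claims of this kind cannot simply be asserted. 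What the paper does at this point, and what your argument omits entirely, is to use the other half of the Case~1A hypothesis: the intersection property \eqref{eq3.19} makes $Q(k;m)$ \emph{dense} (with gaps at most about $1/q_{k+8}$) in each of the four remaining long translates $\III_3,\ldots,\III_6$; on the other hand $T(\III_1)$ and $T(\III_2)$ each contain at most one point of $Q(k;m)$ (their $T$-preimages are avoided, up to a single boundary index), while $T(\III_1)\cup T(\III_2)$ meets $\III_3\cup\cdots\cup\III_6$ in a set of positive length and so must contain many points of $Q(k;m)$ --- that is the contradiction, and it is unconditional in $m$. Your version, even if the missing claim were granted, would only give a bound $m<2q_{k+1}$ contingent on $M$ being ``large relative to $q_{k+1}$'', which is not how the lemma is stated or used (in the application $k$ is chosen from $I_0$ and no such largeness is guaranteed); that mismatch is repairable, but the unsupported hitting claim is a genuine missing idea.
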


\begin{case1b}
There exist
\begin{displaymath}
J_{k+8}(\ell_2)\subset J_k(q^{+})
\quad\mbox{and}\quad
r_1\in\{0,1,2\},
\end{displaymath}
with $r_1\ne r(q^{+})$ given by \eqref{eq3.14}, such that
\begin{equation}\label{eq3.20}
\{y_i:q_{k+1}\le i\le m-q_{k+1}\}\cap(r_1+J_{k+8}(\ell_2))=\emptyset.
\end{equation}
\end{case1b}

Since $J_{k+8}(\ell_2)\subset J_k(q^{+})$, it clearly follows from \eqref{eq3.15} that
\begin{equation}\label{eq3.21}
\{y_i:q_{k+1}\le i\le m-q_{k+1}\}\cap(r(q^{+})+J_{k+8}(\ell_2))=\emptyset.
\end{equation}
Since \eqref{eq3.20} and \eqref{eq3.21} are analogs of \eqref{eq3.12}, the $T$-power expansion argument in the proof of Lemma~\ref{lem30} shows that for every integer $1-\ell_2\le h\le q_{k+9}-2-\ell_2$, we have
\begin{align}
&
\{y_i:q_{k+1}+q_{k+9}\le i\le m-q_{k+1}-q_{k+9}\}\cap T^h(r_2+J_{k+8}(\ell_2))=\emptyset,
\label{eq3.22}
\\
&
\{y_i:q_{k+1}+q_{k+9}\le i\le m-q_{k+1}-q_{k+9}\}\cap T^h(r(q^+)+J_{k+8}(\ell_2))=\emptyset.
\label{eq3.23}
\end{align}
For notational convenience, for every integer $1-\ell_2\le h\le q_{k+9}-2-\ell_2$, we write
\begin{align}
T^h(r(q^{+})+J_{k+8}(\ell_2))
&
=r^{\star}(\ell_2+h)+J_{k+8}(\ell_2+h),
\label{eq3.24}
\\
T^h(r_2+J_{k+8}(\ell_2))
&
=r^{\star\star}(\ell_2+h)+J_{k+8}(\ell_2+h).
\label{eq3.25}
\end{align}
Then combining \eqref{eq3.22}--\eqref{eq3.25}, we have
\begin{align}
&
\{y_i:q_{k+1}+q_{k+9}\le i\le m-q_{k+1}-q_{k+9}\}\cap(r^{\star}(q)+J_{k+8}(q)=\emptyset,
\label{eq3.26}
\\
&
\{y_i:q_{k+1}+q_{k+9}\le i\le m-q_{k+1}-q_{k+9}\}\cap(r^{\star\star}(q)+J_{k+8}(q)=\emptyset,
\label{eq3.27}
\end{align}
for every integer $1\le q\le q_{k+9}-2$.
Clearly
\begin{displaymath}
r^{\star}(q)\ne r^{\star\star}(q),
\quad
1\le q\le q_{k+9}-2.
\end{displaymath}
We now split Case~1B into two complementary cases.

\begin{case2a}
The following \textit{intersection property} holds.
For every
\begin{displaymath}
J_{k+16}(\ell)\subset J_{k+8}(q)
\quad\mbox{and}\quad
r\in\{0,1,2\},
\end{displaymath}
with $r\ne r^{\star}(q),r^{\star\star}(q)$ given by \eqref{eq3.24} and \eqref{eq3.25}, we have
\begin{displaymath}
\{y_i:q_{k+1}+q_{k+9}\le i\le m-q_{k+1}-q_{k+9}\}\cap(r+J_{k+16}(\ell))\ne\emptyset.
\end{displaymath}
\end{case2a}

\begin{lem}\label{lem32a}
Case~2A is impossible.
\end{lem}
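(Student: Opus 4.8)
The plan is to mimic exactly the strategy that makes Case~1A impossible, now one scale deeper. The key structural fact is that we are in Case~1B, so by \eqref{eq3.26} and \eqref{eq3.27} we already have \emph{two} distinct ``forbidden slots'' $r^\star(q)$ and $r^{\star\star}(q)$ in $\{0,1,2\}$ for every $J_{k+8}(q)$ with $1\le q\le q_{k+9}-2$; since these two values are distinct, exactly one value $r'(q)\in\{0,1,2\}$ remains. The substantial-overlap analysis carried out for the scale-$k$ intervals applies verbatim to the scale-$(k+8)$ intervals: by \eqref{eq3.17} and \eqref{eq3.18} (applied with $k$ replaced by $k+8$), if $\{q'\alpha\}$ and $\{q''\alpha\}$ are neighbours in $\AAA_{k+8}(\alpha)$ lying in the same long interval $(0,1-\alpha)$ or $(1-\alpha,1)$, then $J_{k+8}(q')\cap J_{k+8}(q'')$ has length $>\tfrac{1}{2q_{k+9}}$, hence contains some $J_{k+16}(\ell)$. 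So the ``intersection property'' of Case~2A forces hits of the orbit into \emph{every} translate $r+J_{k+16}(\ell)$ except possibly the single leftover slot, throughout the whole overlap region.

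The core of the argument, as in Lemma~\ref{lem31}, is a counting contradiction. First I would use the intersection property of Case~2A together with the substantial overlapping of the scale-$(k+8)$ intervals to show that the restricted orbit $\{y_i : q_{k+1}+q_{k+9}\le i\le m-q_{k+1}-q_{k+9}\}$ must visit, in each of the two long intervals and at each of the three copies $r\in\{0,1,2\}$, all of the consecutive ``boundary-avoiding'' subintervals $J_{k+16}(\ell)$ that tile $(0,1-\alpha)$ and $(1-\alpha,1)$ — except that one copy may be entirely skipped over a maximal run of $\AAA_{k+8}$-adjacent intervals, namely the copy matching the leftover value $r'(q)$. The overlapping guarantees that no $J_{k+16}$-interval can be ``jumped over'': if two $\AAA_{k+8}$-neighbours both must be hit in copy $r$, then because their $J_{k+8}$-intervals overlap in length exceeding a full $J_{k+16}$-interval, the hit points chain together and force a hit in the intermediate $J_{k+16}(\ell)\subset J_{k+8}(q')\cap J_{k+8}(q'')$ in that same copy $r$. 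Propagating this along the two long intervals shows that, in at least two of the three copies, essentially all $q_{k+17}$-many division points $\{q\alpha\}$ with $1\le q\le q_{k+17}-2$ are approached within $\Vert q_{k+16}\alpha\Vert$ by some $y_i$ with index in the restricted range. But each intersection point $y_i$ mod~$1$ lands within one such $J_{k+16}$-interval in at most one copy, so the number of distinct useful indices $i$ is at least of order $q_{k+17}$.

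This lower bound on the number of intersection points then collides with an upper bound coming from the length $M$ of the geodesic segment: the number $m$ of intersections with $h_1\cup h_2\cup h_3$ over a segment of length $M$ is $O(M)$ (precisely $m\le M/\sqrt{1+\alpha^2}+O(1)$ by \eqref{eq3.9}), and in Case~2A we are trying to run this with $M$ comparable to $q_{k+1}$ (this is where the eventual ``optimal $k$'' choice enters), so $m$ is of order $q_{k+1}$. Since badly approximable slope means $a_i\le A$ for all $i$, the convergent denominators grow at a controlled geometric rate, so $q_{k+17}\le q_{k+1}\prod_{j}(a_j+1)$ is only a \emph{bounded} multiple of $q_{k+1}$ — wait, that goes the wrong way, so the contradiction must instead be extracted at the matching scale: one runs the argument so that the forced number of orbit visits in the restricted index window, roughly $q_{k+17}$ minus boundary losses of size $O(q_{k+1}+q_{k+9})$, \emph{exceeds} the number of available indices $m-2(q_{k+1}+q_{k+9})$ unless a whole copy is skipped, and skipping a whole copy over the full length of a long interval is precisely what is ruled out by the two already-forbidden slots $r^\star,r^{\star\star}$ plus the overlapping. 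I expect the main obstacle to be the careful bookkeeping at the two singularities: the buffer zones $B(0)$ and $B(-1)$ (see \eqref{eq3.2}--\eqref{eq3.4}), now at scale $k+16$, and the loss of the endpoint intervals near $0$, $1-\alpha$, $1$ must be shown to be negligible compared to the bulk count — i.e.\ one must verify that the two leftover ``forbidden'' copies really do propagate all the way across each long interval rather than petering out near a singularity, which is exactly what property (i)--(iii) of the special intervals in \eqref{eq3.5} and the ``same order'' alignment of $B(0)$ and $B(-1)$ were set up to guarantee.
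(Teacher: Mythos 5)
Your opening moves are on the right track: working at scale $k+8$, noting that Case~1B already supplies two distinct forbidden residues $r^{\star}(q)\ne r^{\star\star}(q)$ via \eqref{eq3.26}--\eqref{eq3.27}, and exploiting that the overlap of $\AAA_{k+8}$-neighbouring intervals $J_{k+8}(q')\cap J_{k+8}(q'')$ contains a whole $J_{k+16}(\ell)$. This is exactly how the paper begins, and it is what allows the synchronization property of Lemma~\ref{lem31} to be transplanted to scale $k+8$. But the engine you then propose --- a counting contradiction in which the forced number of orbit visits (of order $q_{k+17}$) is to exceed the number of available indices $m-2(q_{k+1}+q_{k+9})$ --- cannot work, and you notice this yourself (``wait, that goes the wrong way''). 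The reason is structural: in Case~2A there is \emph{no} upper bound on $m$; the bound on $m$ is the \emph{conclusion} of the complementary Case~2B (Lemma~\ref{lem32b}), not something available here, and for large $m$ an abundance of indices is perfectly consistent with the orbit being dense in many intervals. Your fallback sentence --- that ``skipping a whole copy over the full length of a long interval is precisely what is ruled out by the two already-forbidden slots plus the overlapping'' --- is circular and in fact backwards: the two forbidden slots show that the orbit \emph{does} skip two full copies over each long interval (after synchronization, $4$ of the $6$ long intervals in \eqref{eq4.17} are entirely disjoint from $Q^*(k;m)$ as defined in \eqref{eq4.21}), and by itself this is not contradictory.

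The missing idea is the use of the dynamics $T$ to play the orbit-free intervals against the orbit-dense ones. The paper's route is: (i) establish synchronization of neighbouring $3$-copy extensions $J_{k+8}(q';3)$, $J_{k+8}(q'';3)$ exactly as in Lemma~\ref{lem31}, using the Case~2A intersection property on a $J_{k+16}(\ell)$ inside the overlap; (ii) conclude that each of the $6$ long intervals in \eqref{eq4.17} is either disjoint from $Q^*(k;m)$ or $1/q_{k+16}$-dense with points of $Q^*(k;m)$; (iii) show that precisely the $4$ intervals $\III_1,\ldots,\III_4$ of \eqref{eq4.25}--\eqref{eq4.26}, determined by $r^{\star}$ and $r^{\star\star}$ on the two halves $(0,1-\alpha)$ and $(1-\alpha,1)$, are disjoint from $Q^*(k;m)$, while Case~2A forces $Q^*(k;m)$ to be dense in the remaining two, $\III_5$ and $\III_6$; and (iv) observe that each $T$-image $T(\III_j)$, $j=1,\ldots,4$, can contain at most one point of $Q^*(k;m)$ (any such point has its $T$-predecessor in $\III_j$, so only an index at the very edge of the range can survive), yet $T(\III_1)\cup\cdots\cup T(\III_4)$ overlaps $\III_5\cup\III_6$ in a set of substantial length, where density forces far more than $4$ points of $Q^*(k;m)$. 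That tension between emptiness of the $T$-images and forced density in the overlap is the actual contradiction, and it is absent from your proposal; without it (or an equivalent mechanism), the argument does not close.
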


\begin{case2b}
There exist
\begin{displaymath}
J_{k+16}(\ell_3)\subset J_{k+8}(q^{++})
\quad\mbox{and}\quad
r_2\in\{0,1,2\},
\end{displaymath}
with $r_2\ne r^{\star}(q^{++}),r^{\star\star}(q^{++})$ given by \eqref{eq3.24} and \eqref{eq3.25}, such that
\begin{equation}\label{eq3.28}
\{y_i:q_{k+1}+q_{k+9}\le i\le m-q_{k+1}-q_{k+9}\}\cap(r_2+J_{k+16}(\ell_3))=\emptyset.
\end{equation}
\end{case2b}

\begin{lem}\label{lem32b}
If Case~2B holds, then
\begin{equation}\label{eq3.29}
m\le 2q_{k+1}+2q_{k+9}+2q_{k+17}+6.
\end{equation}
\end{lem}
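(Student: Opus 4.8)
The plan is to iterate the structure already built in Cases 1B and 2B one more level, with the scales $q_{k+1}$, $q_{k+9}$, $q_{k+17}$ playing the roles that $q_{k+1}$ and $q_{k+9}$ played before. Concretely, the hypothesis of Case~2B gives, via \eqref{eq3.28} together with the analog of \eqref{eq3.21} coming from $J_{k+16}(\ell_3)\subset J_{k+8}(q^{++})$ and \eqref{eq3.26}--\eqref{eq3.27}, three translates $r+J_{k+16}(\ell_3)$, $r\in\{0,1,2\}$, all of which are missed by the truncated intersection set $\{y_i:q_{k+1}+q_{k+9}\le i\le m-q_{k+1}-q_{k+9}\}$. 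First I would run the $T$-power extension argument from Lemma~\ref{lem30} on each of these three forbidden translates, shrinking the index window by a further $q_{k+17}$ on each side, to conclude that for every $1\le q\le q_{k+17}-2$ all three translates $r+J_{k+16}(q)$, $r\in\{0,1,2\}$, are disjoint from $\{y_i:q_{k+1}+q_{k+9}+q_{k+17}\le i\le m-q_{k+1}-q_{k+9}-q_{k+17}\}$.

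The key observation is then that the special intervals $J_{k+16}(q)$, $1\le q\le q_{k+17}-2$, substantially overlap and together cover the two long intervals $(0,1-\alpha)$ and $(1-\alpha,1)$ (property (i) of \eqref{eq3.5}), while avoiding the singularities $0$, $1-\alpha$, $1$ (property (ii)). Hence the three-copy extensions $J_{k+16}(q;3)$, $q$ ranging over $1\le q\le q_{k+17}-2$, cover all of $[0,3)$ except possibly tiny neighborhoods of the split-singularity points $0,1,2$ and $1-\alpha, 2-\alpha$ — and the geodesic does not hit a vertex, so it never lands in those neighborhoods anyway. Since now \emph{all three} translates are forbidden for each $q$, the set $\{y_i:q_{k+1}+q_{k+9}+q_{k+17}\le i\le m-q_{k+1}-q_{k+9}-q_{k+17}\}$ must be empty. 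Therefore the number of indices in that window is at most $0$, which forces $m-q_{k+1}-q_{k+9}-q_{k+17} < q_{k+1}+q_{k+9}+q_{k+17}$, i.e. $m < 2q_{k+1}+2q_{k+9}+2q_{k+17}$, and a small constant slack ($+6$, to absorb the off-by-one in the ``$q_{k+1}-2$'' ranges and the three omitted singular points) gives \eqref{eq3.29}.

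I expect the main obstacle to be the bookkeeping that guarantees the three-copy extensions $J_{k+16}(q;3)$ genuinely cover $[0,3)$ away from the vertices: one must track how the translate index $r$ (an element of $\{0,1,2\}$) interacts with the interval-exchange structure of $T$ on $[0,3)$, and check that as $q$ ranges over $1\le q\le q_{k+17}-2$ and $r$ over $\{0,1,2\}$, every point of $[0,3)$ not equal to a pullback of a singularity lies in some $r+J_{k+16}(q)$. This is where properties (i)--(iii) of \eqref{eq3.5}, the bound \eqref{eq3.17} on the length of $J_{k+16}(q)$, and the hypothesis that the geodesic misses all vertices of the L-surface must be combined carefully; everything else is a routine repetition of the $T$-power extension bookkeeping already carried out in the proofs leading to \eqref{eq3.15} and \eqref{eq3.26}--\eqref{eq3.27}.
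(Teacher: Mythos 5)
Your proposal follows essentially the same route as the paper: extend the three exclusions coming from \eqref{eq3.28} and \eqref{eq3.26}--\eqref{eq3.27} by the $T$-power argument with the window shrunk by $q_{k+17}$, observe that at each index $1\le q\le q_{k+17}-2$ the three forbidden translates carry distinct integer parts and hence exhaust $r\in\{0,1,2\}$, and then use the fact that the $3$-copy chains built from the intervals $J_{k+16}(q)$ cover $[0,3)$ up to finitely many exceptional points. The one place where your wording would not survive a literal reading is the endgame: the chains miss not ``tiny neighborhoods'' of the singular points but exactly the six points $0$, $1-\alpha$, $1$, $2-\alpha$, $2$, $3-\alpha$ (this is property (i) of the intervals \eqref{eq3.5}), and it is false that the geodesic ``never lands in'' a neighborhood of positive length of such a point; density guarantees the opposite, so if the uncovered set really were a union of short intervals the argument would collapse. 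The paper avoids invoking the no-vertex hypothesis at this stage altogether: since $Q^{**}(k;m)$ is disjoint from $r+J_{k+16}(q)$ for every $r\in\{0,1,2\}$ and every $1\le q\le q_{k+17}-2$, each of its elements must be one of those six points, and the $y_i$ are pairwise distinct (modulo $1$ they form an orbit of the irrational rotation by $\alpha$), so $m\ge 2q_{k+1}+2q_{k+9}+2q_{k+17}+7$ would give at least $7$ distinct elements in a set of size $6$, a contradiction; this pigeonhole is the actual source of the ``$+6$'' in \eqref{eq3.29}, rather than slack for off-by-one errors. Your stronger claim that $Q^{**}(k;m)$ is empty can be salvaged (an intersection point equal to one of the six exceptional points would force the geodesic to hit the vertex at that step or the next), but it is not needed.
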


\begin{proof}[Proof of Theorem~\ref{thm2}]
Suppose that $I_0\subset[0,3)$ is $(\LLL_\alpha;M)$-free, so that
\begin{displaymath}
\{y_i:1\le i\le m\}\cap I_0=\emptyset.
\end{displaymath}
Let $J_k(\ell_1)=J(\alpha;k;\ell_1)$ be the \textit{longest} special interval of the form \eqref{eq3.5} such that
\begin{displaymath}
r(\ell_1)+J_k(\ell_1)\subset I_0
\quad\mbox{for some $\ell_1$ and $r(\ell_1)\in\{0,1,2\}$}.
\end{displaymath}
Then
\begin{equation}\label{eq3.30}
\length(I_0)<4(\Vert q_{k-1}\alpha\Vert+\Vert q_{k}\alpha\Vert),
\end{equation}
since otherwise there exists a longer special interval
\begin{displaymath}
r(\ell)+J_{k-1}(\ell)\subset I_0
\quad\mbox{for some $\ell$ and $r(\ell)\in\{0,1,2\}$},
\end{displaymath}
a contradiction.
Combining \eqref{eq3.30} with \eqref{eq2.8}, we have
\begin{equation}\label{eq3.31}
\length(I_0)<\frac{4}{q_{k}}+\frac{4}{q_{k+1}}<\frac{8}{q_{k}}.
\end{equation}

On the other hand, it follows from \eqref{eq3.8} and \eqref{eq3.9} that
\begin{equation}\label{eq3.32}
M\le(m+1)\sqrt{1+\alpha^2}.
\end{equation}
Also, in view of Lemmas \ref{lem31}--\ref{lem32b}, it is clear that the bound \eqref{eq3.29} holds.
Finally, recall that $0<\alpha<1$ and
\begin{displaymath}
\alpha=[a_1,a_2,a_3,\ldots]=\frac{1}{a_1+\frac{1}{a_2+\frac{1}{a_3+\cdots}}}
\end{displaymath}
is badly approximable, so there exists a constant $A$ such that the continued fraction digits $a_i\le A$ for every $i=1,2,3,\ldots.$
It follows from \eqref{eq2.9} that
\begin{equation}\label{eq3.33}
q_{k+1}<q_{k+9}<q_{k+17}\le(A+1)^{17}q_k.
\end{equation}
Combining \eqref{eq3.29}, \eqref{eq3.32} and \eqref{eq3.33}, we see that
\begin{equation}\label{eq3.34}
M\le(2q_{k+1}+2q_{k+9}+2q_{k+17}+7)\sqrt{1+\alpha^2}
<7(A+1)^{17}q_k\sqrt{2}.
\end{equation}

It now follows from \eqref{eq3.31} and \eqref{eq3.34} that a geodesic segment $\LLL_{\alpha}(S;t)$, $0\le t\le M$, of length $M=7(A+1)^{17}q_k\sqrt{2}$ must intersect every subinterval $I$ of $h_1\cup h_2\cup h_3$ with $\length(I)=8/q_{k}$.
Since the product $M\length(I)=56(A+1)^{17}\sqrt{2}$ is a constant independent of~$k$, this establishes superdensity of the half-infinite geodesic.
\end{proof}

%
%

\section{Proof of Lemmas \ref{lem31}--\ref{lem32b}}\label{sec4}

Before we present the proof of our main lemmas, we begin by investigating a simple situation which serves to illustrate our method.

\begin{case0}
There exist integers $r^*(\ell_1)$ and $r^{**}(\ell_1)$ such that

(1) $r(\ell_1),r^*(\ell_1),r^{**}(\ell_1)$ form a permutation of $0,1,2$;

(2) $\{y_i:1\le i\le m\}\cap(r^*(\ell_1)+J_k(\ell_1))=\emptyset$; and

(3) $\{y_i:1\le i\le m\}\cap(r^{**}(\ell_1)+J_k(\ell_1))=\emptyset$.
\end{case0}

\begin{lem}\label{lem41}
If the Simple Case holds, then $m\le 2q_{k+1}+6$.
\end{lem}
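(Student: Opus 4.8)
The plan is to exploit the \emph{substantial overlapping} of the short special intervals $J_k(q)$ together with a $T$-power extension argument that now runs with respect to \emph{all three} copies $r\in\{0,1,2\}$ simultaneously. Since the Simple Case supplies three $(\LLL_\alpha;M)$-free translates $r+J_k(\ell_1)$, $r\in\{0,1,2\}$, we have available a full $3$-copy extension $J_k(\ell_1;3)$ in the sense of \eqref{eq3.7} which is disjoint from $\{y_i:1\le i\le m\}$. Applying the $T$-power extension argument of Lemma~\ref{lem30} to each of the three translates (or, more efficiently, to the whole set $J_k(\ell_1;3)$, which $T$ permutes cyclically among the three copies), we obtain, for every integer $1-\ell_1\le h\le q_{k+1}-2-\ell_1$,
\begin{displaymath}
\{y_i:q_{k+1}\le i\le m-q_{k+1}\}\cap\bigl(r+J_k(\ell_1+h)\bigr)=\emptyset
\quad\text{for all }r\in\{0,1,2\}.
\end{displaymath}
In other words, writing $q=\ell_1+h$, the set of \emph{interior} intersection points $\{y_i:q_{k+1}\le i\le m-q_{k+1}\}$ avoids $r+J_k(q)$ for every $r\in\{0,1,2\}$ and every $1\le q\le q_{k+1}-2$.

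Next I would use property~(iii) of the special intervals, the substantial overlapping quantified in \eqref{eq3.6}: as $q$ ranges over those indices for which $\{q\alpha\}$ lies in $(0,1-\alpha)$, the intervals $J_k(q)$ form an overlapping chain whose union covers all of $(0,1-\alpha)$, and similarly for $(1-\alpha,1)$ by property~(i). Hence the union $\bigcup_{q}\bigl(r+J_k(q)\bigr)$, taken over $1\le q\le q_{k+1}-2$ and all $r\in\{0,1,2\}$, covers every point of $[0,3)$ that lies outside the three buffer zones $r+B(0)$ and the three buffer zones $r+B(-1)$, $r\in\{0,1,2\}$ (property~(ii)). Therefore every interior intersection point $y_i$ with $q_{k+1}\le i\le m-q_{k+1}$ must in fact lie inside one of these six buffer zones. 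But the buffer zones are tiny: each has length at most $d^{\ast}+d^{\ast\ast}=2\Vert q_k\alpha\Vert+\Vert q_{k+1}\alpha\Vert<3\Vert q_k\alpha\Vert$, and on each single horizontal edge the only points of the $T$-orbit that can fall there are the $O(1)$ partition neighbors of the two singularities.

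Finally I would convert this into the numerical bound $m\le 2q_{k+1}+6$. The interior intersection points are $y_i$ for $q_{k+1}\le i\le m-q_{k+1}$, a total of $m-2q_{k+1}+1$ points; these are distinct points of the $T$-orbit on $[0,3)$, hence also distinct modulo~$1$, since the $y_i$ are successive iterates of the irrational rotation $T\bmod 1$ on $[0,1)$ and no two among the first $\le q_{k+1}-1$ of them coincide. A point of this orbit can land in the buffer zone of a singularity only if it is among the bounded number of its partition neighbors; counting the two singularities $0$ and $1-\alpha$, their three copies on $h_1,h_2,h_3$, and the (at most two) neighbors on each side, one gets at most $6$ admissible values of the index $i$ in the interior range. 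Hence $m-2q_{k+1}+1\le 7$, which gives $m\le 2q_{k+1}+6$. The delicate point — the main obstacle — is the bookkeeping in this last step: one must check that the overlapping chain of $J_k(q)$'s really does cover everything except the buffer zones \emph{on all three copies}, and that no interior $y_i$ can sneak through by being a partition point that is simultaneously close to a singularity yet not itself a buffer-zone neighbor; this is exactly where the precise form \eqref{eq3.2}--\eqref{eq3.5} of the buffer zones and of the $J_k(q)$, together with the ordering remarks following \eqref{eq3.1}, has to be invoked carefully.
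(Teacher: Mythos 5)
Your first step is sound and matches the paper: applying the $T$-power extension argument of Lemma~\ref{lem30} to each of the three free translates, and using that $T^h$ is a bijection of $[0,3)$ acting as the rotation mod~$1$, one does get that the interior points $\{y_i:q_{k+1}\le i\le m-q_{k+1}\}$ avoid $r+J_k(q)$ for all $r\in\{0,1,2\}$ and all $1\le q\le q_{k+1}-2$ (this is \eqref{eq4.7}--\eqref{eq4.9}; your parenthetical that $T$ permutes the three copies \emph{cyclically} is not accurate, but is also not needed). The genuine gap is in your covering and counting step. You claim that the union of all $r+J_k(q)$ covers only the complement of the six buffer zones $r+B(0)$, $r+B(-1)$, and then you bound the number of interior $y_i$ lying in the buffer zones by $6$ on the grounds that such a point must be ``among the bounded number of partition neighbors'' of a singularity. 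This last claim is false: the $y_i$ are not partition points of $\AAA_k(\alpha)$ (the partition is the orbit of $0$ under the rotation, while $y_i\equiv y_1+(i-1)\alpha \pmod 1$ is a different orbit), and each buffer zone is an interval of length $d^{\ast}+d^{\ast\ast}\asymp 1/q_{k+1}$, so an orbit segment of unrestricted length $m$ would visit the buffer zones roughly $m/q_{k+1}$ times --- unboundedly often as $m$ grows. Since bounding $m$ is exactly what the lemma asserts, no absolute constant like $6$ or $7$ can be extracted this way, and your final inequality $m-2q_{k+1}+1\le 7$ does not follow.

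The missing idea is that the coverage is in fact much better than ``outside the buffer zones'': because $J_k(q)=[\{q\alpha\}-d^{\ast\ast},\{q\alpha\}+d^{\ast})$ extends by $d^{\ast\ast}$ to the left and $d^{\ast}$ to the right, while the left and right neighbors of each singularity sit at distances $d^{\ast}$ and $d^{\ast\ast}$ respectively (this is the point of \eqref{eq3.2}--\eqref{eq3.4}), the special intervals attached to those neighbors reach all the way up to the singularity. Hence the six chains of $3$-copy extensions cover \emph{all} of $[0,3)$ except (at most) the six points $0,1-\alpha,1,2-\alpha,2,3-\alpha$; the buffer zones minus their centers are covered. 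With this, the conclusion is immediate: the interior points $y_i$ are distinct (irrational rotation mod~$1$), they all avoid every chain, so there are at most $6$ of them, forcing $m\le 2q_{k+1}+6$. So your outline coincides with the paper's, but as written the proof fails at its decisive step, and repairing it requires precisely the sharpened covering statement above rather than any counting of orbit points inside the buffer zones.
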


\begin{proof}
For notational simplicity, we write
\begin{equation}\label{eq4.1}
Q(k;m)=\{y_i:q_{k+1}\le i\le m-q_{k+1}\}.
\end{equation}
Since the properties (2) and (3) in the Simple Case are analogs of \eqref{eq3.12}, we can repeat the $T$-power extension argument in Lemma~\ref{lem30} and conclude that for every integer $1-\ell_1\le h\le q_{k+1}-2-\ell_1$, we have
\begin{align}
&
Q(k;m)\cap T^h(r^*(\ell_1)+J_k(\ell_1))=\emptyset,
\label{eq4.2}
\\
&
Q(k;m)\cap T^h(r^{**}(\ell_1)+J_k(\ell_1))=\emptyset.
\label{eq4.3}
\end{align}
As in \eqref{eq3.14}, we write
\begin{align}
T^h(r^*(\ell_1)+J_k(\ell_1))
&
=r^*(\ell_1+h)+J_k(\ell_1+h),
\label{eq4.4}
\\
T^h(r^{**}(\ell_1)+J_k(\ell_1))
&
=r^{**}(\ell_1+h)+J_k(\ell_1+h),
\label{eq4.5}
\end{align}
for every integer $1-\ell_1\le h\le q_{k+1}-2-\ell_1$.
Note that \eqref{eq4.4} and \eqref{eq4.5} define $r^*(q)$ and $r^{**}(q)$ respectively for every integer $1\le q\le q_{k+1}-2$.
Recall next that $r(q)$ is defined by \eqref{eq3.14}.
Indeed, using the notation \eqref{eq3.14}, it is easy to check that the assertion \eqref{eq3.13} for every integer $1-\ell_1\le h\le q_{k+1}-2-\ell_1$ implies that
\begin{equation}\label{eq4.6}
Q(k;m)\cap(r(\ell_1+h)+J_k(\ell_1+h))=\emptyset
\end{equation}
for every integer $1-\ell_1\le h\le q_{k+1}-2-\ell_1$.

Combining \eqref{eq4.2}--\eqref{eq4.6} for every integer $1-\ell_1\le h\le q_{k+1}-2-\ell_1$, we deduce that for every integer $1\le q\le q_{k+1}-2$, we have
\begin{align}
&
Q(k;m)\cap(r^*(q)+J_k(q))=\emptyset,
\label{eq4.7}
\\
&
Q(k;m)\cap(r^{**}(q)+J_k(q))=\emptyset,
\label{eq4.8}
\\
&
Q(k;m)\cap(r(q)+J_k(q))=\emptyset.
\label{eq4.9}
\end{align}
Also, in view of the property (1) in the Simple Case, it is clear that $r(q),r^*(q),r^{**}(q)$ form a permutation of $0,1,2$ for every integer
$1\le q\le q_{k+1}-2$.

By definition, the $3$-copy extensions $J_k(q;3)$, $1\le q\le q_{k+1}-2$, give rise to $6$ continuous chains of overlapping intervals in the torus/circle $[0,3)$ such that the $6$ chains completely cover the $6$ intervals
\begin{displaymath}
(0,1-\alpha),
\quad
(1-\alpha,1),
\quad
(1,2-\alpha),
\quad
(2-\alpha,2),
\quad
(2,3-\alpha),
\quad
(3-\alpha,3),
\end{displaymath}
and there are only $6$ points in $[0,3)$ that are not covered by the $6$ chains, namely
\begin{displaymath}
0,
\quad
1-\alpha,
\quad
1,
\quad
2-\alpha,
\quad
2,
\quad3-\alpha.
\end{displaymath}

Combining \eqref{eq4.7}--\eqref{eq4.9} for every integer $1\le q\le q_{k+1}-2$, we deduce that the set $Q(k;m)$ is not covered by the $6$ chains. 
Indeed, if $m\ge2q_{k+1}+7$, then the set \eqref{eq4.1} has at least $7$ distinct elements, which is more than~$6$, giving rise to a contradiction. 
We conclude therefore that, under the conditions of the Simple Case, we must have $m\le2q_{k+1}+6$, and this completes the proof.
\end{proof}

\begin{proof}[Proof of Lemma~\ref{lem31}]
Again, for notational simplicity, we use \eqref{eq4.1}.

Our first step is to prove that, under the condition of Case~1A, any two neighboring $3$-copy extensions $J_k(q';3)$ and $J_k(q'';3)$ are \textit{synchronized} in the following precise sense:
For each $r\in\{0,1,2\}$, we have
\begin{equation}\label{eq4.10}
Q(k;m)\cap(r+J_{k}(q'))=\emptyset
\quad\mbox{if and only if}\quad
Q(k;m)\cap(r+J_{k}(q''))=\emptyset.
\end{equation}
To establish this, we consider two cases.

Suppose first that $r=r(q')$.
Using the notation \eqref{eq3.14}, it follows from \eqref{eq3.13} that
\begin{equation}\label{eq4.11}
Q(k;m)\cap(r(q')+J_k(q'))=\emptyset.
\end{equation}
Assume on the contrary that
\begin{equation}\label{eq4.12}
Q(k;m)\cap(r(q')+J_k(q''))\ne\emptyset.
\end{equation}
Then it follows from \eqref{eq3.15} that $r(q'')\ne r(q')$.
On the other hand, we know that the intersection $J_k(q')\cap J_k(q'')$ must contain a special interval of the type $J_{k+8}(j_0)$ for some~$j_0$, so
\begin{equation}\label{eq4.13}
r(q')+J_{k+8}(j_0)\subset(r(q')+J_k(q'))\cap(r(q')+J_k(q'')).
\end{equation}
Since $r(q')\ne r(q'')$ and $J_{k+8}(j_0)\subset J_k(q'')$, the condition of Case~1A is satisfied with $q=q''$, and \eqref{eq3.19} becomes
\begin{equation}\label{eq4.14}
Q(k;m)\cap(r(q')+J_{k+8}(j_0))\ne\emptyset.
\end{equation}
But \eqref{eq4.11} and \eqref{eq4.13} contradict \eqref{eq4.14}, and so \eqref{eq4.12} fails.
Thus our claim \eqref{eq4.10} holds in this case.

Suppose next that $r\ne r(q')$.
As before, we use the fact that the intersection $J_k(q')\cap J_k(q'')$ must contain a special interval of the type $J_{k+8}(j_0)$ for some~$j_0$, so
\begin{equation}\label{eq4.15}
r+J_{k+8}(j_0)\subset(r+J_k(q'))\cap (r+J_k(q'')).
\end{equation}
Since $r\ne r(q')$, the condition of Case~1A is satisfied with $q=q'$, and \eqref{eq3.19} becomes
\begin{equation}\label{eq4.16}
Q(k;m)\cap(r+J_{k+8}(j_0))\ne\emptyset.
\end{equation}
It now follows from \eqref{eq4.15} and \eqref{eq4.16} that
\begin{displaymath}
Q(k;m)\cap(r+J_{k}(q'))\ne\emptyset
\quad\mbox{and}\quad
Q(k;m)\cap(r+J_k(q''))\ne\emptyset,
\end{displaymath}
so that our claim \eqref{eq4.10} holds also in this case.

By definition, the $3$-copy extensions $J_k(q;3)$, $1\le q\le q_{k+1}-2$, give rise to $6$ continuous chains of overlapping intervals in the torus/circle $[0,3)$ such that the $6$ chains completely cover the $6$ intervals
\begin{equation}\label{eq4.17}
(0,1-\alpha),
\quad
(1-\alpha,1),
\quad
(1,2-\alpha),
\quad
(2-\alpha,2),
\quad
(2,3-\alpha),
\quad
(3-\alpha,3).
\end{equation}
The synchronization property we have just established now implies that each of the $6$ long special intervals in \eqref{eq4.17} satisfies one of the following two properties.
Either such a long special interval is \textit{disjoint} from the set $Q(k;m)$, or the set $Q(k;m)$ is \textit{dense} in such a long special interval, in the precise sense that every subinterval of length $1/q_{k+8}$ contains a point from the set $Q(k;m)$.

Moreover, it is not difficult to show that precisely $2$ of the $6$ long special intervals in \eqref{eq4.17} are disjoint from the set $Q(k;m)$.
To see this, choose two integers $q'$ and $q''$ satisfying $1\le q',q''\le q_{k+1}-2$ such that
\begin{equation}\label{eq4.18}
J_k(q')\subset(0,1-\alpha)
\quad\mbox{and}\quad
J_k(q'')\subset(1-\alpha,1).
\end{equation}
Then it follows from \eqref{eq3.13} and \eqref{eq3.14} that
\begin{equation}\label{eq4.19}
Q(k;m)\cap(r(q')+J_k(q'))=\emptyset
\quad\mbox{and}\quad
Q(k;m)\cap(r(q'')+J_k(q''))=\emptyset.
\end{equation}
Now write
\begin{equation}\label{eq4.20}
\III_1=r(q')+(0,1-\alpha)
\quad\mbox{and}\quad
\III_2=r(q'')+(1-\alpha,1).
\end{equation}
The synchronization property and \eqref{eq4.18}--\eqref{eq4.20} now imply that
\begin{displaymath}
Q(k;m)\cap\III_1=\emptyset
\quad\mbox{and}\quad
Q(k;m)\cap\III_2=\emptyset.
\end{displaymath}
Note that the union $\III_1\cup\III_2$ modulo~$1$ is precisely the unit interval $[0,1)$.

Now $\III_1$ and $\III_2$ are $2$ of the $6$ long special intervals in \eqref{eq4.17}.
Let $\III_j$, $3\le j\le 6$, denote the remaining long special intervals in \eqref{eq4.17}.
The condition of Case~1A now implies that these $4$ intervals are not disjoint from $Q(k;m)$, so that $Q(k;m)$ is dense in each of them.
Each $T$-image $T(\III_j)$, $j=1,2$, has at most $1$ common point with the set $Q(k;m)$.
This is a contradiction, since the union $T(\III_1)\cup T(\III_2)$ has a \textit{substantial} intersection with the union $\III_3\cup\III_4\cup\III_5\cup\III_6$, which implies that it must have a \textit{substantial} intersection with the set $Q(k;m)$, much more than at most $2$ elements.
Thus Case~1A is impossible, and this completes the proof.
\end{proof}

\begin{proof}[Proof of Lemma~\ref{lem32a}]
For notational simplicity, we write
\begin{equation}\label{eq4.21}
Q^*(k;m)=\{y_i:q_{k+1}+q_{k+9}\le i\le m-q_{k+1}-q_{k+9}\}.
\end{equation}

We can proceed along similar lines as in the first part of the proof of Lemma~\ref{lem31} for Case~1A, and show that any two neighboring $3$-copy extensions $J_{k+8}(q';3)$ and $J_{k+8}(q'';3)$ are \textit{synchronized} in the following precise sense:
For each $r\in\{0,1,2\}$, we have
\begin{displaymath}
Q^*(k;m)\cap(r+J_{k+8}(q'))=\emptyset
\quad\mbox{if and only if}\quad
Q^*(k;m)\cap(r+J_{k+8}(q''))=\emptyset.
\end{displaymath}

By definition, the $3$-copy extensions $J_{k+8}(q;3)$, $1\le q\le q_{k+9}-2$, give rise to $6$ continuous chains of overlapping intervals in the torus/circle $[0,3)$ such that the $6$ chains completely cover the $6$ intervals \eqref{eq4.17}.
The synchronization property now implies that each of the $6$ long special intervals in \eqref{eq4.17} satisfies one of the following two properties.
Either such a long special interval is \textit{disjoint} from the set $Q^*(k;m)$, or the set $Q^*(k;m)$ is \textit{dense} in such a long special interval, in the precise sense that every subinterval of length $1/q_{k+16}$ contains a point from the set $Q^*(k;m)$.

Moreover, it is not difficult to show that precisely $4$ of the $6$ long special intervals in \eqref{eq4.17} are disjoint from the set $Q^*(k;m)$.
To see this, choose two integers $q'$ and $q''$ satisfying $1\le q',q''\le q_{k+9}-2$ such that
\begin{equation}\label{eq4.22}
J_{k+8}(q')\subset(0,1-\alpha)
\quad\mbox{and}\quad
J_{k+8}(q'')\subset(1-\alpha,1).
\end{equation}
Then it follows from \eqref{eq3.26} and \eqref{eq3.27} that
\begin{align}
Q^*(k;m)\cap(r^{\star}(q')+J_{k+8}(q'))=\emptyset,
&\quad
Q^*(k;m)\cap(r^{\star}(q'')+J_{k+8}(q''))=\emptyset,
\label{eq4.23}
\\
Q^*(k;m)\cap(r^{\star\star}(q')+J_{k+8}(q'))=\emptyset,
&\quad
Q^*(k;m)\cap(r^{\star\star}(q'')+J_{k+8}(q''))=\emptyset.
\label{eq4.24}
\end{align}
Now write
\begin{align}
\III_1=r^{\star}(q')+(0,1-\alpha),
&\quad
\III_2=r^{\star}(q'')+(1-\alpha,1),
\label{eq4.25}
\\
\III_3=r^{\star\star}(q')+(0,1-\alpha),
&\quad
\III_4=r^{\star\star}(q'')+(1-\alpha,1).
\label{eq4.26}
\end{align}
The synchronization property and \eqref{eq4.22}--\eqref{eq4.26} now imply that
\begin{align}
Q^*(k;m)\cap\III_1=\emptyset,
&\quad
Q^*(k;m)\cap\III_2=\emptyset,
\nonumber
\\
Q^*(k;m)\cap\III_3=\emptyset,
&\quad
Q^*(k;m)\cap\III_4=\emptyset.
\nonumber
\end{align}
Note that the union $\III_1\cup\III_2\cup\III_3\cup\III_4$ modulo~$1$ is precisely the unit interval $[0,1)$ twice.

Now $\III_1,\III_2,\III_3,\III_4$ are $4$ of the $6$ long special intervals in \eqref{eq4.17}.
Let $\III_j$, $j=5,6$, denote the remaining long special intervals in \eqref{eq4.17}.
The condition of Case~2A now implies that these $2$ intervals are not disjoint from $Q^*(k;m)$, so that $Q^*(k;m)$ is dense in each of them.
Each $T$-image $T(\III_j)$, $j=1,\ldots,4$, has at most $1$ common point with the set $Q^*(k;m)$.
This is a contradiction, since the union
\begin{displaymath}
T(\III_1)\cup T(\III_2)\cup T(\III_3)\cup T(\III_4)
\end{displaymath}
has a \textit{substantial} intersection with the union $\III_5\cup\III_6$, which implies that it must have a \textit{substantial} intersection with the set $Q^*(k;m)$, much more than at most $4$ elements.
Thus Case~2A is impossible, and this completes the proof.
\end{proof}

\begin{proof}[Proof of Lemma~\ref{lem32b}]
For notational simplicity, we define $Q^*(k;m)$ by \eqref{eq4.21}, and write
\begin{displaymath}
Q^{**}(k;m)=\{y_i:q_{k+1}+q_{k+9}+q_{k+17}\le i\le m-q_{k+1}-q_{k+9}-q_{k+17}\}.
\end{displaymath}
Since \eqref{eq3.28} is an analog of \eqref{eq3.12}, we can repeat the $T$-power extension argument in Lemma~\ref{lem30} and conclude that for every integer $1-\ell_3\le h\le q_{k+17}-2-\ell_3$, we have
\begin{equation}\label{eq4.27}
Q^{**}(k;m)\cap T^h(r_2+J_{k+16}(\ell_3))=\emptyset.
\end{equation}
Since $J_{k+16}(\ell_3)\subset J_{k+8}(q^{++})$, it follows from \eqref{eq3.26} and \eqref{eq3.27} that
\begin{align}
&
Q^*(k;m)\cap(r^{\star}(q^{++})+J_{k+16}(\ell_3))=\emptyset,
\label{eq4.28}
\\
&
Q^*(k;m)\cap(r^{\star\star}(q^{++})+J_{k+16}(\ell_3))=\emptyset.
\label{eq4.29}
\end{align}
Next, note that \eqref{eq4.28} and \eqref{eq4.29} are also analogs of \eqref{eq3.12}, so again we can repeat the $T$-power extension argument in Lemma~\ref{lem30} and conclude that for every integer $1-\ell_3\le h\le q_{k+17}-2-\ell_3$, we have
\begin{align}
&
Q^{**}(k;m)\cap T^h(r^{\star}(q^{++})+J_{k+16}(\ell_3))=\emptyset,
\label{eq4.30}
\\
&
Q^{**}(k;m)\cap T^h(r^{\star\star}(q^{++})+J_{k+16}(\ell_3))=\emptyset.
\label{eq4.31}
\end{align}
Now, for every integer $1-\ell_3\le h\le q_{k+17}-2-\ell_3$, we write
\begin{align}
T^h(r^{\star}(q^{++})+J_{k+16}(\ell_3))
&
=r^{(0)}(\ell_3+h)+J_{k+16}(\ell_3+h),
\label{eq4.32}
\\
T^h(r^{\star\star}(q^{++})+J_{k+16}(\ell_3))
&
=r^{(1)}(\ell_3+h)+J_{k+16}(\ell_3+h),
\label{eq4.33}
\\
T^h(r_2+J_{k+16}(\ell_3))
&
=r^{(2)}(\ell_3+h)+J_{k+16}(\ell_3+h).
\label{eq4.34}
\end{align}
Clearly it follows from the assumption of Case~2B that $r^{(0)}(q),r^{(1)}(q),r^{(2)}(q)$ form a permutation of $0,1,2$ for every integer
$1\le q\le q_{k+17}-2$.
Combining \eqref{eq4.27} and \eqref{eq4.30}--\eqref{eq4.34}, we have
\begin{align}
&
Q^{**}(k;m)\cap(r^{(0)}(q)+J_{k+16}(q))=\emptyset,
\label{eq4.35}
\\
&
Q^{**}(k;m)\cap(r^{(1)}(q)+J_{k+16}(q))=\emptyset,
\label{eq4.36}
\\
&
Q^{**}(k;m)\cap(r^{(2)}(q)+J_{k+16}(q))=\emptyset,
\label{eq4.37}
\end{align}
for every integer $1\le q\le q_{k+17}-2$.

Note now that \eqref{eq4.35}--\eqref{eq4.37} are similar to \eqref{eq4.7}--\eqref{eq4.9} in the proof of Lemma~\ref{lem41}, so we now mimic the last part of that proof.

By definition, the $3$-copy extensions $J_{k+16}(q;3)$, $1\le q\le q_{k+17}-2$, give rise to $6$ continuous chains of overlapping intervals in the
torus/circle $[0,3)$ such that the $6$ chains completely cover the $6$ intervals
\begin{displaymath}
(0,1-\alpha),
\quad
(1-\alpha,1),
\quad
(1,2-\alpha),
\quad
(2-\alpha,2),
\quad
(2,3-\alpha),
\quad
(3-\alpha,3),
\end{displaymath}
and there are only $6$ points in $[0,3)$ that are not covered by the $6$ chains, namely
\begin{displaymath}
0,
\quad
1-\alpha,
\quad
1,
\quad
2-\alpha,
\quad
2,
\quad3-\alpha.
\end{displaymath}

Combining \eqref{eq4.35}--\eqref{eq4.37} for every integer $1\le q\le q_{k+17}-2$, we deduce that the set $Q^{**}(k;m)$ is not covered by the $6$ chains. 
Indeed, if $m\ge2q_{k+1}+2q_{k+9}+2q_{k+17}+7$, then the set $Q^{**}(k;m)$ has at least $7$ distinct elements, which is more than~$6$, giving rise to a contradiction. 
We conclude therefore that, under the conditions of Case~2B, we must have $m\le2q_{k+1}+2q_{k+9}+2q_{k+17}+6$, and this completes the proof.
\end{proof}

%
%

\section{Proof of Theorem~\ref{thm1}}\label{sec5}

Consider now an arbitrary finite polysquare surface~$\PPP$.
We are now in a position to complete the proof of Theorem~\ref{thm1}, and show that if the slope of a half-infinite geodesic on $\PPP$ is a badly approximable number, then the geodesic is superdense on~$\PPP$. 

The proof is a fairly straightforward adaptation of the proof of Theorem~\ref{thm2}, apart from the observation that the number of cases we need to consider is a function of the number of square faces of~$\PPP$, and so can be arbitrarily large.

Without loss of generality, we assume as before that the slope of the half-infinite geodesic is greater than~$1$.
Suppose that it has slope $1/\alpha$, where $0<\alpha<1$ is irrational.

Our first step is to generalize the interval exchange transformation $T=T_{\alpha}$ defined in Section~\ref{sec2}.
Suppose that the polysquare surface $\PPP$ has $s$ square faces.
Each square face has a top horizontal edge and a bottom horizontal edge.
Each top horizontal edge is identified with a unique bottom horizontal edge, and these give rise to $s$ horizontal edges $h_1,h_2,\ldots,h_s$.
We now identify these horizontal edges with unit intervals by making use of the correspondences
\begin{displaymath}
h_1=[0,1),
\quad
h_2=[1,2),
\quad
\ldots,
\quad
h_s=[s-1,s).
\end{displaymath}

We now consider the piecewise linear map $T=T_{\alpha}$ defined according to the analog of Figure~2.2 that corresponds to~$\PPP$.
More precisely, for each integer $1\le j\le s$, there exist unique integers $0\le j',j''\le s-1$ such that
\begin{displaymath}
T([j-1,j-\alpha))=[j'+\alpha,j'+1)
\quad\mbox{and}\quad
T([j-\alpha,j))=[j'',j''+\alpha).
\end{displaymath}
Indeed, it is not difficult to see that the map $T:[0,s)\to[0,s)$ is one-to-one and onto.
While $T$ acts on the longer interval $[0,s)$ instead of the unit interval $[0,1)$, if we consider $T$ modulo~$1$, then it acts simply as an $\alpha$-shift, or irrational rotation, in the unit interval.

As before, our main idea involves continued fractions, in particular, the special case of the $3$-distance theorem which becomes a $2$-distance theorem.
Indeed, we can repeat our discussion at the beginning of Section~\ref{sec3} \textit{verbatim} up to the end of the paragraph preceeding \eqref{eq3.7}.
In particular, for any integer $1\le q\le q_{k+1}-2$, we can define the special interval $J_k(q)$ as in \eqref{eq3.5}.
Then analogous to \eqref{eq3.7}, for any integer $1\le q\le q_{k+1}-2$, we define its $s$-copy extension $J_k(q;s)$ by
\begin{displaymath}
J_k(q;s)=J_k(q)\cup(1+J_k(q))\cup\ldots\cup((s-1)+J_k(q))\subset[0,s),
\end{displaymath}
a union of $J_k(q)$ with $s-1$ of its translates.

Let $\LLL_{\alpha}(t)=\LLL_{\alpha}(\PPP;S;t)$, $t\ge0$, be a parametrized half-infinite geodesic on $\PPP$ with initial point $S$ and slope $1/\alpha$, under the usual arc-length parametrization.

Let $M$ be \textit{large}, and consider the initial segment $\LLL_{\alpha}(t)$, $0\le t\le M$, of length~$M$, which we denote by $(\LLL_{\alpha};M)$.
Suppose that
\begin{equation}\label{eq5.1}
0\le t_1<t_2<t_3<\ldots<t_m\le M,
\end{equation}
where
\begin{equation}\label{eq5.2}
t_{i+1}-t_i=\sqrt{1+\alpha^2},
\quad
1\le i\le m-1,
\end{equation}
is the sequence of time instances $t$ when the initial segment $\LLL_{\alpha}(t)$, $0\le t\le M$, intersects the union $h_1\cup\ldots\cup h_s=[0,s)$ of the $s$ horizontal edges of the polysquare surface~$\PPP$.
For notational simplicity let
\begin{equation}\label{eq5.3}
y_i=\LLL_{\alpha}(t_i)\in[0,s),
\quad
1\le i\le m,
\end{equation}
denote these intersection points.

Using the interval exchange transformation $T=T_{\alpha}:[0,s)\to[0,s)$, we see that any two time-consecutive intersection points are governed by the simple relation
\begin{displaymath}
T(y_i)=y_{i+1},
\quad
1\le i\le m-1.
\end{displaymath}

We have the following analog of Lemma~\ref{lem30} which is easily established by the $T$-power extension argument.

\begin{lem}\label{lem50}
Suppose that $J_k(\ell_1)$ is a special interval of the form \eqref{eq3.5}, and there exists $r(\ell_1)\in\{0,1,\ldots,s-1\}$ such that
\begin{equation}\label{eq5.4}
\{y_i:1\le i\le m\}\cap(r(\ell_1)+J_k(\ell_1))=\emptyset,
\end{equation}
where $\{y_i:1\le i\le m\}$ is the set of intersection points defined in \eqref{eq5.3}.
Then for every integer $1-\ell_1\le h\le q_{k+1}-2-\ell_1$, we have
\begin{equation}\label{eq5.5}
\{y_i:q_{k+1}\le i\le m-q_{k+1}\}\cap T^h(r(\ell_1)+J_k(\ell_1))=\emptyset.
\end{equation}
\end{lem}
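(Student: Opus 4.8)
The plan is to reproduce the $T$-power extension argument from the proof of Lemma~\ref{lem30} essentially verbatim, since the only structural change in the polysquare setting is that the interval exchange transformation $T=T_{\alpha}$ now acts on $[0,s)$ rather than on $[0,3)$, and nothing in that argument relied on the specific value $s=3$. Two facts carry the proof, and both are already available: first, $T:[0,s)\to[0,s)$ is one-to-one and onto (recorded when $T$ was defined in the polysquare setting), so $T^h$ is defined for every integer $h$ and $T^h(r(\ell_1)+J_k(\ell_1))\subset[0,s)$ is again a translate of a special interval; second, the time-consecutive intersection points satisfy $T(y_i)=y_{i+1}$ for $1\le i\le m-1$.

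First I would dispose of the case $h=0$: since $q_{k+1}\ge1$ and $m-q_{k+1}\le m$, the set $\{y_i:q_{k+1}\le i\le m-q_{k+1}\}$ is contained in $\{y_i:1\le i\le m\}$, so \eqref{eq5.5} for $h=0$ is immediate from \eqref{eq5.4}. Next, for positive $h$ I would apply $T$ to \eqref{eq5.4} and use $T(y_i)=y_{i+1}$ to get $\{y_i:2\le i\le m\}\cap T(r(\ell_1)+J_k(\ell_1))=\emptyset$; iterating $h$ times gives $\{y_i:1+h\le i\le m\}\cap T^h(r(\ell_1)+J_k(\ell_1))=\emptyset$ for every integer $1\le h\le q_{k+1}-2-\ell_1$. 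For such $h$ one has $q_{k+1}\ge q_{k+1}-1-\ell_1\ge h+1$ and $m-q_{k+1}\le m$, so the window $\{q_{k+1}\le i\le m-q_{k+1}\}$ lies inside $\{1+h\le i\le m\}$, which yields \eqref{eq5.5}. For negative $h$ the symmetric argument with $T^{-1}$ in place of $T$ gives $\{y_i:1\le i\le m+h\}\cap T^h(r(\ell_1)+J_k(\ell_1))=\emptyset$ for every integer $1-\ell_1\le h\le-1$, and since $q_{k+1}\ge1$ and $m+h\ge m+1-\ell_1\ge m+1-q_{k+1}\ge m-q_{k+1}$, the desired window again lies inside, completing the proof.

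Since this is a direct transcription, I do not anticipate a genuine obstacle; the only point that merits a sentence is that one must invoke the bijectivity of $T$ on $[0,s)$ established when $T$ was defined for $\PPP$, so that both the forward iteration by $T$ and the backward iteration by $T^{-1}$ are legitimate and keep us among translates of special intervals inside $[0,s)$. Everything else is the same index bookkeeping as in Lemma~\ref{lem30}.
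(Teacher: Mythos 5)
Your proposal is correct and matches the paper's approach: the paper proves Lemma~\ref{lem50} simply by invoking the $T$-power extension argument of Lemma~\ref{lem30}, which is exactly what you transcribe, with the same $h=0$, $h>0$, $h<0$ index bookkeeping and the same (correct) appeal to the bijectivity of $T$ on $[0,s)$.
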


For notational convenience, for every integer $1-\ell_1\le h\le q_{k+1}-2-\ell_1$, we write
\begin{equation}\label{eq5.6}
T^h(r(\ell_1)+J_k(\ell_1))=r_1^{(0)}(\ell_1+h)+J_k(\ell_1+h).
\end{equation}
Note that \eqref{eq5.6} defines $r(q)$ for every integer $1\le q\le q_{k+1}-2$.
Furthermore, combining \eqref{eq5.5} and \eqref{eq5.6}, we have
\begin{equation}\label{eq5.7}
\{y_i:q_{k+1}\le i\le m-q_{k+1}\}\cap(r_1^{(0)}(q)+J_k(q))=\emptyset
\end{equation}
for every integer $1\le q\le q_{k+1}-2$.

Suppose that  $I_0\subset[0,s)$ is $(\LLL_\alpha;M)$-free, so that
\begin{displaymath}
\{y_i:1\le i\le m\}\cap I_0=\emptyset,
\end{displaymath}
where $\{y_i:1\le i\le m\}$ is the set of intersection points defined in \eqref{eq5.3}.
Let $k$ be an integer, and suppose that $J_k(\ell_1)$ is a special interval of the form \eqref{eq3.5}, and there exists $r(\ell_1)\in\{0,1,\ldots,s-1\}$ such that
\begin{equation}\label{eq5.8}
r(\ell_1)+J_k(\ell_1)\subset I_0.
\end{equation}
Then \eqref{eq5.4} holds.
As before, we shall later choose an \textit{optimal} value of $k$ for which \eqref{eq5.8} holds.

Again, we distinguish a few cases according to the special relations between various sets of intersection points and various special intervals.
We take advantage of the substantial overlapping of the short special intervals $J_k(q)$ defined by \eqref{eq3.5}.

Recall that if $1\le q',q''\le q_{k+1}-2$ are two integers such that $\{q'\alpha\}$ and $\{q''\alpha\}$ are neighboring points in the partition
$\AAA_k(\alpha)$, then the intersection $J_k(q')\cap J_k(q'')$ must contain a special interval of the type $J_{k+8}(q)$ for some $1\le q\le q_{k+9}-2$.
We split the argument into two complementary cases.

Write
\begin{displaymath}
Q(k;m)=\{y_i:q_{k+1}\le i\le m-q_{k+1}\}.
\end{displaymath}

\begin{case1a}
The following \textit{intersection property} holds.
For every
\begin{displaymath}
J_{k+8}(\ell)\subset J_k(q)
\quad\mbox{and}\quad
r\in\{0,1,\ldots,s-1\},
\end{displaymath}
with $r\ne r_1^{(0)}(q)$ given by \eqref{eq5.6}, we have
\begin{displaymath}
Q(k;m)\cap(r+J_{k+8}(\ell))\ne\emptyset.
\end{displaymath}
\end{case1a}

\begin{case1b}
There exist
\begin{displaymath}
J_{k+8}(\ell_2)\subset J_k(q^{(1)})
\quad\mbox{and}\quad
r_1\in\{0,1,\ldots,s-1\},
\end{displaymath}
with $r_1\ne r_1^{(0)}(q^{(1)})$ given by \eqref{eq5.6}, such that
\begin{equation}\label{eq5.9}
Q(k;m)\cap(r_1+J_{k+8}(\ell_2))=\emptyset.
\end{equation}
\end{case1b}

Since $J_{k+8}(\ell_2)\subset J_k(q^{(1)})$, it clearly follows from \eqref{eq5.7} that
\begin{equation}\label{eq5.10}
Q(k;m)\cap(r_1^{(0)}(q^{(1)})+J_{k+8}(\ell_2))=\emptyset.
\end{equation}
Since \eqref{eq5.9} and \eqref{eq5.10} are analogs of \eqref{eq5.4}, the $T$-power expansion argument shows that for every integer
$1-\ell_2\le h\le q_{k+9}-2-\ell_2$, we have
\begin{align}
&
Q^{(1)}(k;m)\cap T^h(r_1+J_{k+8}(\ell_2))=\emptyset,
\label{eq5.11}
\\
&
Q^{(1)}(k;m)\cap T^h(r_1^{(0)}(q^{(1)})+J_{k+8}(\ell_2))=\emptyset,
\label{eq5.12}
\end{align}
where, corresponding to $Q^*(k;m)$ in Section~\ref{sec4}, we write
\begin{displaymath}
Q^{(1)}(k;m)=\{y_i:q_{k+1}+q_{k+9}\le i\le m-q_{k+1}-q_{k+9}\}.
\end{displaymath}
For notational convenience, for every integer $1-\ell_2\le h\le q_{k+9}-2-\ell_2$, we write
\begin{align}
T^h(r_1^{(0)}(q^{(1)})+J_{k+8}(\ell_2))
&
=r_2^{(0)}(\ell_2+h)+J_{k+8}(\ell_2+h),
\label{eq5.13}
\\
T^h(r_1+J_{k+8}(\ell_2))
&
=r_2^{(1)}(\ell_2+h)+J_{k+8}(\ell_2+h).
\label{eq5.14}
\end{align}
Then combining \eqref{eq5.11}--\eqref{eq5.14}, we have
\begin{align}
&
Q^{(1)}(k;m)\cap(r_2^{(0)}(q)+J_{k+8}(q)=\emptyset,
\label{eq5.15}
\\
&
Q^{(1)}(k;m)\cap(r_2^{(1)}(q)+J_{k+8}(q)=\emptyset,
\label{eq5.16}
\end{align}
for every integer $1\le q\le q_{k+9}-2$.
Clearly
\begin{displaymath}
r_2^{(0)}(q)\ne r_2^{(1)}(q),
\quad
1\le q\le q_{k+9}-2.
\end{displaymath}
We now split Case~1B into two complementary cases.

\begin{case2a}
The following \textit{intersection property} holds.
For every
\begin{displaymath}
J_{k+16}(\ell)\subset J_{k+8}(q)
\quad\mbox{and}\quad
r\in\{0,1,\ldots,s-1\},
\end{displaymath}
with $r\ne r_2^{(0)}(q),r_2^{(1)}(q)$ given by \eqref{eq5.13} and \eqref{eq5.14}, we have
\begin{displaymath}
Q^{(1)}(k;m)\cap(r+J_{k+16}(\ell))\ne\emptyset.
\end{displaymath}
\end{case2a}

\begin{case2b}
There exist
\begin{displaymath}
J_{k+16}(\ell_3)\subset J_{k+8}(q^{(2)})
\quad\mbox{and}\quad
r_2\in\{0,1,\ldots,s-1\},
\end{displaymath}
with $r_2\ne r_2^{(0)}(q^{(2)}),r_2^{(1)}(q^{(2)})$ given by \eqref{eq5.13} and \eqref{eq5.14}, such that
\begin{equation}\label{eq5.17}
Q^{(1)}(k;m)\cap(r_2+J_{k+16}(\ell_3))=\emptyset.
\end{equation}
\end{case2b}

Since $J_{k+16}(\ell_3)\subset J_{k+8}(q^{(2)})$, it clearly follows from \eqref{eq5.15} and \eqref{eq5.16} that
\begin{align}
&
Q^{(1)}(k;m)\cap(r_2^{(0)}(q^{(2)})+J_{k+16}(\ell_3))=\emptyset,
\label{eq5.18}
\\
&
Q^{(1)}(k;m)\cap(r_2^{(1)}(q^{(2)})+J_{k+16}(\ell_3))=\emptyset.
\label{eq5.19}
\end{align}
Since \eqref{eq5.17}--\eqref{eq5.19} are analogs of \eqref{eq5.4}, the $T$-power expansion argument shows that for every integer
$1-\ell_3\le h\le q_{k+17}-2-\ell_3$, we have
\begin{align}
&
Q^{(2)}(k;m)\cap T^h(r_2+J_{k+16}(\ell_3))=\emptyset,
\label{eq5.20}
\\
&
Q^{(2)}(k;m)\cap T^h(r_2^{(0)}(q^{(2)})+J_{k+16}(\ell_3))=\emptyset,
\label{eq5.21}
\\
&
Q^{(2)}(k;m)\cap T^h(r_2^{(1)}(q^{(2)})+J_{k+16}(\ell_3))=\emptyset,
\label{eq5.22}
\end{align}
where, corresponding to $Q^{**}(k;m)$ in Section~\ref{sec4}, we write
\begin{displaymath}
Q^{(2)}(k;m)=\{y_i:q_{k+1}+q_{k+9}+q_{k+17}\le i\le m-q_{k+1}-q_{k+9}-q_{k+17}\}.
\end{displaymath}
For notational convenience, for every integer $1-\ell_3\le h\le q_{k+17}-2-\ell_3$, we write
\begin{align}
T^h(r_2^{(0)}(q^{(2)})+J_{k+16}(\ell_3))
&
=r_3^{(0)}(\ell_3+h)+J_{k+16}(\ell_3+h),
\label{eq5.23}
\\
T^h(r_2^{(1)}(q^{(2)})+J_{k+16}(\ell_3))
&
=r_3^{(1)}(\ell_3+h)+J_{k+16}(\ell_3+h),
\label{eq5.24}
\\
T^h(r_2+J_{k+16}(\ell_3))
&
=r_3^{(2)}(\ell_3+h)+J_{k+16}(\ell_3+h).
\label{eq5.25}
\end{align}
Then combining \eqref{eq5.20}--\eqref{eq5.25}, we have
\begin{align}
&
Q^{(2)}(k;m)\cap(r_3^{(0)}(q)+J_{k+16}(q)=\emptyset,
\nonumber
\\
&
Q^{(2)}(k;m)\cap(r_3^{(1)}(q)+J_{k+16}(q)=\emptyset,
\nonumber
\\
&
Q^{(2)}(k;m)\cap(r_3^{(2)}(q)+J_{k+16}(q)=\emptyset,
\nonumber
\end{align}
for every integer $1\le q\le q_{k+17}-2$.
Clearly
\begin{displaymath}
r_3^{(0)}(q),r_3^{(1)}(q),r_3^{(2)}(q)\mbox{ are distinct},
\quad
1\le q\le q_{k+17}-2.
\end{displaymath}
We now split Case~2B into two complementary cases.

\begin{case3a}
The following \textit{intersection property} holds.
For every
\begin{displaymath}
J_{k+24}(\ell)\subset J_{k+16}(q)
\quad\mbox{and}\quad
r\in\{0,1,\ldots,s-1\},
\end{displaymath}
with $r\ne r_3^{(0)}(q),r_3^{(1)}(q),r_3^{(2)}(q)$ given by \eqref{eq5.23}--\eqref{eq5.25}, we have
\begin{displaymath}
Q^{(2)}(k;m)\cap(r+J_{k+24}(\ell))\ne\emptyset.
\end{displaymath}
\end{case3a}

\begin{case3b}
There exist
\begin{displaymath}
J_{k+24}(\ell_4)\subset J_{k+8}(q^{(3)})
\quad\mbox{and}\quad
r_3\in\{0,1,\ldots,s-1\},
\end{displaymath}
with $r_3\ne r_3^{(0)}(q),r_3^{(1)}(q),r_3^{(2)}(q)$ given by \eqref{eq5.23}--\eqref{eq5.25}, such that
\begin{displaymath}
Q^{(2)}(k;m)\cap(r_3+J_{k+24}(\ell_4))=\emptyset.
\end{displaymath}
\end{case3b}

Suppose that $1\le\tau\le s-2$.
Assume that for every integer $1\le q\le q_{k+8\tau-7}-2$, there are distinct integers
\begin{equation}\label{eq5.26}
r_\tau^{(0)}(q),\ldots,r_\tau^{(\tau-1)}(q)\in\{0,1,\ldots,s-1\}
\end{equation}
such that for every integer $1\le j\le\tau$,
\begin{equation}\label{eq5.27}
Q^{(\tau-1)}(k;m)\cap(r_\tau^{(j-1)}(q)+J_{k+8\tau-8}(q)=\emptyset
\end{equation}
for every integer $1\le q\le q_{k+8\tau-7}-2$, where
\begin{displaymath}
Q^{(\tau-1)}(k;m)=\left\{y_i:\sum_{u=1}^\tau q_{k+8u-7}\le i\le m-\sum_{u=1}^\tau q_{k+8u-7}\right\}.
\end{displaymath}
Assume further that we have two complementary cases.

\begin{casetau0a}
The following \textit{intersection property} holds.
For every
\begin{displaymath}
J_{k+8\tau}(\ell)\subset J_{k+8\tau-8}(q)
\quad\mbox{and}\quad
r\in\{0,1,\ldots,s-1\},
\end{displaymath}
with $r\ne r_\tau^{(0)}(q),\ldots,r_\tau^{(\tau-1)}(q)$ given by \eqref{eq5.26}, we have
\begin{displaymath}
Q^{(\tau-1)}(k;m)\cap(r+J_{k+8\tau}(\ell))\ne\emptyset.
\end{displaymath}
\end{casetau0a}

\begin{casetau0b}
There exist
\begin{displaymath}
J_{k+8\tau}(\ell_{\tau+1})\subset J_{k+8\tau-8}(q^{(\tau)})
\quad\mbox{and}\quad
r_\tau\in\{0,1,\ldots,s-1\},
\end{displaymath}
with $r_\tau\ne r_\tau^{(0)}(q),\ldots,r_\tau^{(\tau-1)}(q)$ given by \eqref{eq5.26}, such that
\begin{equation}\label{eq5.28}
Q^{(\tau-1)}(k;m)\cap(r_\tau+J_{k+8\tau}(\ell_{\tau+1}))=\emptyset.
\end{equation}
\end{casetau0b}

Since $J_{k+8\tau}(\ell_{\tau+1})\subset J_{k+8\tau-8}(q^{(\tau)})$, it clearly follows from \eqref{eq5.27} that for every integer $1\le j\le\tau$,
\begin{equation}\label{eq5.29}
Q^{(\tau-1)}(k;m)\cap(r_\tau^{(j-1)}(q^{(\tau)})+J_{k+8\tau}(\ell_{\tau+1}))=\emptyset.
\end{equation}
Since \eqref{eq5.28} and \eqref{eq5.29} are analogs of \eqref{eq5.4}, the $T$-power expansion argument shows that for every integer
$1-\ell_{\tau+1}\le h\le q_{k+8\tau+1}-2-\ell_{\tau+1}$, we have
\begin{equation}\label{eq5.30}
Q^{(\tau)}(k;m)\cap T^h(r_\tau+J_{k+8\tau}(\ell_{\tau+1}))=\emptyset,
\end{equation}
as well as
\begin{equation}\label{eq5.31}
Q^{(\tau)}(k;m)\cap T^h(r_\tau^{(j-1)}(q^{(\tau)})+J_{k+8\tau}(\ell_{\tau+1}))=\emptyset
\end{equation}
for every integer $1\le j\le\tau$, where
\begin{displaymath}
Q^{(\tau)}(k;m)=\left\{y_i:\sum_{u=1}^{\tau+1}q_{k+8u-7}\le i\le m-\sum_{u=1}^{\tau+1}q_{k+8u-7}\right\}.
\end{displaymath}
For notational convenience, for every integer $1-\ell_{\tau+1}\le h\le q_{k+8\tau+1}-2-\ell_{\tau+1}$, we write
\begin{equation}\label{eq5.32}
T^h(r_\tau^{(j-1)}(q^{(\tau)})+J_{k+8\tau}(\ell_{\tau+1}))=r_{\tau+1}^{(j-1)}(\ell_{\tau+1}+h)+J_{k+8\tau}(\ell_{\tau+1}+h)
\end{equation}
for every integer $1\le j\le\tau$, and also write
\begin{equation}\label{eq5.33}
T^h(r_\tau+J_{k+8\tau}(\ell_{\tau+1}))=r_{\tau+1}^{(\tau)}(\ell_{\tau+1}+h)+J_{k+8\tau}(\ell_{\tau+1}+h).
\end{equation}
Then combining \eqref{eq5.30}--\eqref{eq5.33}, we have, for every integer $1\le j\le\tau+1$,
\begin{displaymath}
Q^{(\tau)}(k;m)\cap(r_{\tau+1}^{(j-1)}(q)+J_{k+8\tau}(q)=\emptyset
\end{displaymath}
for every integer $1\le q\le q_{k+8\tau+1}-2$.
Clearly
\begin{displaymath}
r_{\tau+1}^{(0)}(q),\ldots,r_{\tau+1}^{(\tau)}(q)\mbox{ are distinct},
\quad
1\le q\le q_{k+8\tau+1}-2.
\end{displaymath}
We now split Case~$\tau$B into two complementary cases.

\begin{casetau1a}
The following \textit{intersection property} holds.
For every
\begin{displaymath}
J_{k+8\tau+8}(\ell)\subset J_{k+8\tau}(q)
\quad\mbox{and}\quad
r\in\{0,1,\ldots,s-1\},
\end{displaymath}
with $r\ne r_{\tau+1}^{(0)}(q),\ldots,r_{\tau+1}^{(\tau)}(q)$ given by \eqref{eq5.32} and \eqref{eq5.33}, we have
\begin{displaymath}
Q^{(\tau)}(k;m)\cap(r+J_{k+8\tau+8}(\ell))\ne\emptyset.
\end{displaymath}
\end{casetau1a}

\begin{casetau1b}
There exist
\begin{displaymath}
J_{k+8\tau+8}(\ell_{\tau+2})\subset J_{k+8\tau}(q^{(\tau+1)})
\quad\mbox{and}\quad
r_{\tau+1}\in\{0,1,\ldots,s-1\},
\end{displaymath}
with $r_{\tau+1}\ne r_{\tau+1}^{(0)}(q),\ldots,r_{\tau+1}^{(\tau)}(q)$ given by \eqref{eq5.32} and \eqref{eq5.33}, such that
\begin{displaymath}
Q^{(\tau)}(k;m)\cap(r_{\tau+1}+J_{k+8\tau+8}(\ell_{\tau+2}))=\emptyset.
\end{displaymath}
\end{casetau1b}

In particular, if $\tau=s-2$, we have the following final case.

\begin{cases1b}
There exist
\begin{displaymath}
J_{k+8s-8}(\ell_s)\subset J_{k+8s-16}(q^{(s-1)})
\quad\mbox{and}\quad
r_{s-1}\in\{0,1,\ldots,s-1\},
\end{displaymath}
with $r_{s-1}\ne r_{s-1}^{(0)}(q^{(s-1)}),\ldots,r_{s-1}^{(s-2)}(q^{(s-1)})$ given by \eqref{eq5.32} and \eqref{eq5.33} in the special case $\tau=s-2$, such that
\begin{equation}\label{eq5.34}
Q^{(s-2)}(k;m)\cap(r_{s-1}+J_{k+8s-8}(\ell_s))=\emptyset.
\end{equation}
\end{cases1b}

\begin{lem}\label{lem51}
For every $\tau=1,\ldots,s-1$, Case $\tau$A is impossible.
\end{lem}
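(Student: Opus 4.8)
The plan is to carry out, for general $s$ and every $1\le\tau\le s-1$, the two-part argument that proves Lemmas~\ref{lem31} and~\ref{lem32a} (which are exactly the cases $s=3$, $\tau=1$ and $s=3$, $\tau=2$). Fix $\tau$ and suppose, towards a contradiction, that Case~$\tau$A holds. I keep the notation of the Case~$\tau$ set-up: the $\tau$ tracks $r_\tau^{(0)}(q),\ldots,r_\tau^{(\tau-1)}(q)\in\{0,\ldots,s-1\}$ from \eqref{eq5.26}, the disjointness relations \eqref{eq5.27}, and I abbreviate $Q^{(\tau-1)}=Q^{(\tau-1)}(k;m)$.

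The first step is a \textit{synchronization} property for neighbouring $s$-copy extensions: if $1\le q',q''\le q_{k+8\tau-7}-2$ and $\{q'\alpha\}$, $\{q''\alpha\}$ are neighbouring points of $\AAA_{k+8\tau-8}(\alpha)$ lying in the same one of the intervals $(0,1-\alpha)$, $(1-\alpha,1)$, then for every $r\in\{0,\ldots,s-1\}$,
\[
Q^{(\tau-1)}\cap(r+J_{k+8\tau-8}(q'))=\emptyset
\quad\Longleftrightarrow\quad
Q^{(\tau-1)}\cap(r+J_{k+8\tau-8}(q''))=\emptyset.
\]
I would prove this by the two-case argument of Lemma~\ref{lem31}: the overlap $J_{k+8\tau-8}(q')\cap J_{k+8\tau-8}(q'')$ contains a special interval $J_{k+8\tau}(j_0)$ by the analogue of \eqref{eq3.17}--\eqref{eq3.18} (using $q_{k+8\tau+1}\ge16q_{k+8\tau-7}$), and then, according as $r$ is or is not one of the residues $r_\tau^{(0)}(q'),\ldots,r_\tau^{(\tau-1)}(q')$, one plays \eqref{eq5.27} against the intersection property of Case~$\tau$A applied to $J_{k+8\tau}(j_0)$. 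Combined with \eqref{eq5.27} and Case~$\tau$A itself, synchronization pins down which of the $2s$ long intervals $r+(0,1-\alpha)$ and $r+(1-\alpha,1)$, $0\le r\le s-1$, are disjoint from $Q^{(\tau-1)}$: choosing $q_{\mathrm{left}}$ with $J_{k+8\tau-8}(q_{\mathrm{left}})\subset(0,1-\alpha)$ and $q_{\mathrm{right}}$ with $J_{k+8\tau-8}(q_{\mathrm{right}})\subset(1-\alpha,1)$, these are exactly the $\tau$ intervals $r_\tau^{(j-1)}(q_{\mathrm{left}})+(0,1-\alpha)$ together with the $\tau$ intervals $r_\tau^{(j-1)}(q_{\mathrm{right}})+(1-\alpha,1)$, $1\le j\le\tau$, while in each of the remaining $2(s-\tau)\ge2$ long intervals $Q^{(\tau-1)}$ is \textit{dense}, meeting every subinterval of length $1/q_{k+8\tau}$. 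Write $F$ for the union of the $2\tau$ free long intervals and $D$ for the union of the other $2(s-\tau)$, so that, up to the $2s$ singular points, $[0,s)=F\sqcup D$ with $\meas(F)=\tau$ and $\meas(D)=s-\tau$.

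The contradiction is then obtained as in Lemmas~\ref{lem31} and~\ref{lem32a}. Since $T$ acts as a single translation on each long interval, $T(\III)$ is again an interval of the same length for every long interval $\III$; and since $T\colon[0,s)\to[0,s)$ is a bijection while the index window defining $Q^{(\tau-1)}$ loses only one element under $i\mapsto i+1$, each $T(\III)$ with $\III$ free meets $Q^{(\tau-1)}$ in at most one point, so $T(F)$ contains at most $2\tau$ points of $Q^{(\tau-1)}$. On the other hand $T(F)\bmod1$ is all of $[0,1)$ apart from finitely many points, so $T(F)$ is not contained in $F$; as $\meas(T(F))=\tau=\meas(F)$, this forces $T(F)$ to meet $D$ in a set of positive measure, bounded below by a constant $c_0(\alpha)>0$ since all these intervals have endpoints in $\Zz\cup(\Zz+\alpha)\cup(\Zz-\alpha)$. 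Because $Q^{(\tau-1)}$ is dense in $D$, the set $T(F)$ then contains far more than $2\tau$ points of $Q^{(\tau-1)}$ --- a contradiction, and the lemma follows.

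The step I expect to be the main obstacle is the assertion $T(F)\neq F$, equivalently that the union $F$ of the free long intervals is not invariant under the interval exchange $T$. This is where the combinatorial structure of $T$ --- and ultimately the connectedness of $\PPP$, which forbids a proper flow-invariant sub-region --- genuinely enters, and for general $s$ it must be argued once and for all, in contrast to the explicit finite checks available for the L-surface in Lemmas~\ref{lem31} and~\ref{lem32a}. A secondary point is to ensure that the density scale $1/q_{k+8\tau}$ is fine enough relative to $c_0(\alpha)$ and $\tau$; here one uses that the intersection property of Case~$\tau$A already forces $Q^{(\tau-1)}$, and hence $m$ and $q_{k+8\tau}$, to be large.
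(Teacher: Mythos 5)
Your proposal follows the paper's own proof of Lemma~\ref{lem51} essentially step for step: the synchronization of neighbouring $s$-copy extensions proved by the two-case argument of Lemma~\ref{lem31}, the identification via \eqref{eq5.26}--\eqref{eq5.27} and the intersection property of Case~$\tau$A of precisely $2\tau$ free long intervals among the $2s$ intervals in \eqref{eq5.40} (namely $r_\tau^{(j-1)}(q')+(0,1-\alpha)$ and $r_\tau^{(j-1)}(q'')+(1-\alpha,1)$), the density of $Q^{(\tau-1)}(k;m)$ in the remaining $2(s-\tau)$ intervals, the fact that each $T$-image of a free interval meets $Q^{(\tau-1)}(k;m)$ in at most one point, and the final counting contradiction. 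So in structure there is nothing here that differs from the paper.

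The one point where you try to go beyond the paper is also the one step that is wrong as written: the inference ``$T(F)\bmod 1$ is all of $[0,1)$ apart from finitely many points, so $T(F)$ is not contained in $F$'' is a non sequitur, since $F\bmod 1$ is likewise all of $[0,1)$ apart from the two points $0$ and $1-\alpha$; covering modulo $1$ says nothing about containment inside $[0,s)$. You rightly flag the non-invariance of $F$ as the crux, and this is exactly the step the paper compresses into the phrase ``substantial intersection'' without further argument. Your instinct that connectedness of $\PPP$ is the missing input is correct, and the gap closes as follows: if $T(F)\cap D$ had measure zero then, since $\meas(T(F))=\meas(F)=\tau$, we would have $T(F)=F$ up to a finite set; but $F$ is a union of atoms of the partition with breakpoints in $\Zz\cup(\Zz-\alpha)$ while $T(F)$ is a union of atoms of the partition with breakpoints in $\Zz\cup(\Zz+\alpha)$, and since $2\alpha\notin\Zz$ the only common breakpoints are integers, so $F$ must be a union of whole edges $h_j$; as $T$ maps $h_j$ into $h_{u(j)}\cup h_{w(j)}$ for two bijections $u$ (``square above'') and $w$ (``square above the right-hand neighbour'') of the index set, the corresponding set of $\tau$ squares would be invariant under $u$ and $w$, hence under $r=u^{-1}w$ and under the group generated by $u$ and $r$, which acts transitively because $\PPP$ is edge-connected --- contradicting $1\le\tau\le s-1$. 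Then $T(F)\cap D$ is a nonempty finite union of intervals with endpoints in $\Zz\cup(\Zz+\alpha)\cup(\Zz-\alpha)$, hence of length at least a constant $c_0(\alpha)>0$, and density of $Q^{(\tau-1)}(k;m)$ at scale $1/q_{k+8\tau}$ gives more than $2\tau$ points of $Q^{(\tau-1)}(k;m)$ in $T(F)$ once $q_{k+8\tau}>(2\tau+1)/c_0(\alpha)$. On your secondary point: Case~$\tau$A is a hypothesis on the orbit and does not force $k$ to be large, so your suggested remedy is misdirected; the finitely many small values of $k$ are instead absorbed into the final superdensity constant, a caveat the paper shares.
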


\begin{lem}\label{lem52}
If Case $(s-1)$B holds, then
\begin{equation}\label{eq5.35}
m\le2s+2\sum_{u=1}^sq_{k+8u-7}.
\end{equation}
\end{lem}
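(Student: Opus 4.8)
The plan is to run, at the final level of the iteration, the same scheme used for $s=3$ in the proof of Lemma~\ref{lem32b}, which in turn mimics the proof of Lemma~\ref{lem41}. Concretely, under Case~$(s-1)$B I aim to show that the deepest core set
\begin{displaymath}
Q^{(s-1)}(k;m)=\left\{y_i:\sum_{u=1}^sq_{k+8u-7}\le i\le m-\sum_{u=1}^sq_{k+8u-7}\right\}
\end{displaymath}
is disjoint from \emph{every} $s$-copy extension $J_{k+8s-8}(q;s)$, $1\le q\le q_{k+8s-7}-2$. Since these $s$-copy extensions form $2s$ chains of overlapping intervals covering the $2s$ long intervals $(j-1,j-\alpha)$ and $(j-\alpha,j)$, $1\le j\le s$, leaving uncovered only the $2s$ points $\{0,1,\ldots,s-1\}\cup\{1-\alpha,2-\alpha,\ldots,s-\alpha\}$, the set $Q^{(s-1)}(k;m)$ can then have at most $2s$ elements, and counting indices yields $m\le 2s+2\sum_{u=1}^sq_{k+8u-7}$.

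First I would unwind the data already produced at level $\tau=s-1$ by the $\tau=s-2$ step of the iteration: there are distinct integers $r_{s-1}^{(0)}(q),\ldots,r_{s-1}^{(s-2)}(q)\in\{0,1,\ldots,s-1\}$ with
\begin{displaymath}
Q^{(s-2)}(k;m)\cap\bigl(r_{s-1}^{(j-1)}(q)+J_{k+8s-16}(q)\bigr)=\emptyset
\end{displaymath}
for every $1\le j\le s-1$ and $1\le q\le q_{k+8s-15}-2$. Case~$(s-1)$B supplies $J_{k+8s-8}(\ell_s)\subset J_{k+8s-16}(q^{(s-1)})$ together with \eqref{eq5.34}; specialising the display to $q=q^{(s-1)}$ and using the inclusion $J_{k+8s-8}(\ell_s)\subset J_{k+8s-16}(q^{(s-1)})$ gives $Q^{(s-2)}(k;m)\cap(r_{s-1}^{(j-1)}(q^{(s-1)})+J_{k+8s-8}(\ell_s))=\emptyset$ for $1\le j\le s-1$. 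Together with \eqref{eq5.34}, and since $r_{s-1}$ differs from each of $r_{s-1}^{(0)}(q^{(s-1)}),\ldots,r_{s-1}^{(s-2)}(q^{(s-1)})$, I obtain $s$ disjointness statements
\begin{displaymath}
Q^{(s-2)}(k;m)\cap\bigl(\rho+J_{k+8s-8}(\ell_s)\bigr)=\emptyset,
\end{displaymath}
one for each $\rho$ in a set of $s$ offsets that is therefore a permutation of $0,1,\ldots,s-1$.

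Next I would apply the $T$-power extension argument of Lemma~\ref{lem50} to each of these $s$ statements. Since $J_{k+8s-8}(\ell_s)$ is a special interval at level $k+8s-8$, for every integer $1-\ell_s\le h\le q_{k+8s-7}-2-\ell_s$ the set $Q^{(s-1)}(k;m)$---which is $Q^{(s-2)}(k;m)$ with both index ends moved inward by $q_{k+8s-7}=q_{(k+8s-8)+1}$---is disjoint from $T^h(\rho+J_{k+8s-8}(\ell_s))$ for each of the $s$ offsets $\rho$. Writing these images in the form $r_s^{(j-1)}(\ell_s+h)+J_{k+8s-8}(\ell_s+h)$ as in \eqref{eq5.32}--\eqref{eq5.33} defines integers $r_s^{(0)}(q),\ldots,r_s^{(s-1)}(q)$ for every $1\le q\le q_{k+8s-7}-2$. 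Because $T$ reduced modulo one is the irrational rotation by $\alpha$ while $T$ itself is a bijection of $[0,s)$, the $s$ images at a common fractional position occupy the $s$ distinct unit strips, so $r_s^{(0)}(q),\ldots,r_s^{(s-1)}(q)$ is again a permutation of $0,1,\ldots,s-1$ for every such $q$. Hence $Q^{(s-1)}(k;m)\cap(r+J_{k+8s-8}(q))=\emptyset$ for all $r\in\{0,1,\ldots,s-1\}$ and all $1\le q\le q_{k+8s-7}-2$, that is, $Q^{(s-1)}(k;m)\cap J_{k+8s-8}(q;s)=\emptyset$ for all such $q$. The covering-and-counting step then concludes as in Lemma~\ref{lem41}: $Q^{(s-1)}(k;m)$ is contained in the $2s$-point exceptional set above, and since the $y_i$ are pairwise distinct (the irrational rotation having no periodic points), this forces $m-2\sum_{u=1}^sq_{k+8u-7}+1\le 2s$.

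I expect the main obstacle to be purely the bookkeeping: checking that the offsets $r_s^{(j-1)}(q)$ are well defined, depending only on $q$ and not on the auxiliary $h$ and $\ell_s$; that the ``permutation of $\{0,1,\ldots,s-1\}$'' property really is transported by each application of $T^h$; and that the admissible range $1-\ell_s\le h\le q_{k+8s-7}-2-\ell_s$ is wide enough to realise every $q$ with $1\le q\le q_{k+8s-7}-2$ while the core set contracts by only $q_{k+8s-7}$ at each end. This is conceptually identical to the $s=3$ instance treated in Lemma~\ref{lem32b}; the only extra care required is in tracking the nested indices $k+8u-7$.
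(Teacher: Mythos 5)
Your proposal is correct and follows essentially the same route as the paper: specialise the level-$(s-1)$ disjointness data (the analogue of \eqref{eq5.27}) to $q^{(s-1)}$, combine with \eqref{eq5.34} to get $s$ pairwise distinct offsets, propagate by the $T$-power extension argument to all $1\le q\le q_{k+8s-7}-2$, and then use the $2s$-chain covering plus counting of the at most $2s$ uncovered points. Your explicit justifications of the permutation transport under $T^h$ and of the distinctness of the $y_i$ are details the paper leaves implicit, and your count even gives the marginally sharper $m\le 2s-1+2\sum_{u=1}^s q_{k+8u-7}$, which of course implies \eqref{eq5.35}.
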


Before we prove Lemmas \ref{lem51} and~\ref{lem52}, we first complete the proof of Theorem~\ref{thm1}.

\begin{proof}[Proof of Theorem~\ref{thm1}]
Suppose that $I_0\subset[0,s)$ is $(\LLL_\alpha;M)$-free, so that
\begin{displaymath}
\{y_i:1\le i\le m\}\cap I_0=\emptyset.
\end{displaymath}
Let $J_k(\ell_1)=J(\alpha;k;\ell_1)$ be the \textit{longest} special interval of the form \eqref{eq3.5} such that
\begin{displaymath}
r(\ell_1)+J_k(\ell_1)\subset I_0
\quad\mbox{for some $\ell_1$ and $r(\ell_1)\in\{0,1,\ldots,s-1\}$}.
\end{displaymath}
Then we can show as before that
\begin{equation}\label{eq5.36}
\length(I_0)<\frac{8}{q_{k}}.
\end{equation}

On the other hand, it follows from \eqref{eq5.1} and \eqref{eq5.2} that
\begin{equation}\label{eq5.37}
M\le(m+1)\sqrt{1+\alpha^2}.
\end{equation}
Also, in view of Lemmas \ref{lem51} and~\ref{lem52}, it is clear that the bound \eqref{eq5.35} holds.
Finally, the inequalities \eqref{eq3.33} are replaced by the inequalities
\begin{equation}\label{eq5.38}
q_{k+1}<q_{k+9}<\ldots<q_{k+8s-7}\le(A+1)^{8s-7}q_k.
\end{equation}
Combining \eqref{eq5.35}, \eqref{eq5.37} and \eqref{eq5.38}, we see that
\begin{equation}\label{eq5.39}
M\le\left(2s+1+2\sum_{u=1}^sq_{k+8u-7}\right)\sqrt{1+\alpha^2}
<(4s+1)(A+1)^{8s-7}q_k\sqrt{2}.
\end{equation}

It now follows from \eqref{eq5.36} and \eqref{eq5.39} that a geodesic segment $\LLL_{\alpha}(t)$, $0\le t\le M$, of length
$M=(4s+1)(A+1)^{8s-7}q_k\sqrt{2}$ must intersect every subinterval $I$ of $h_1\cup\ldots\cup h_s$ with $\length(I)=8/q_{k}$.
Since the product $M\length(I)$ is a constant independent of~$k$, this establishes superdensity of the half-infinite geodesic.
\end{proof}

It remains to prove Lemmas \ref{lem51} and~\ref{lem52}.

\begin{proof}[Proof of Lemma~\ref{lem51}]
We can proceed along similar lines as in the first part of the proof of Lemma~\ref{lem31}, and show that any two neighboring $s$-copy extensions $J_{k+8\tau-8}(q';s)$ and $J_{k+8\tau-8}(q'';s)$ are \textit{synchronized} in the following precise sense:
For each $r\in\{0,1,\ldots,s-1\}$, we have
\begin{displaymath}
Q^{(\tau-1)}(k;m)\cap(r+J_{k+8\tau-8}(q'))=\emptyset
\end{displaymath}
if and only if
\begin{displaymath}
Q^{(\tau-1)}(k;m)\cap(r+J_{k+8\tau-8}(q''))=\emptyset.
\end{displaymath}

By definition, the $s$-copy extensions $J_{k+8\tau-8}(q;s)$, $1\le q\le q_{k+8\tau-7}-2$, give rise to $2s$ continuous chains of overlapping intervals in the torus/circle $[0,s)$ such that the $2s$ chains completely cover the $2s$ intervals
\begin{equation}\label{eq5.40}
[0,1-\alpha),
\quad
[1-\alpha,1),
\quad
\ldots,
\quad
[s-1,s-\alpha),
\quad
[s-\alpha,s).
\end{equation}
The synchronization property now implies that each of the $2s$ long special intervals in \eqref{eq5.40} satisfies one of the following two properties.
Either such a long special interval is \textit{disjoint} from the set $Q^{(\tau-1)}(k;m)$, or the set $Q^{(\tau-1)}(k;m)$ is \textit{dense} in such a long special interval, in the precise sense that every subinterval of length $1/q_{k+8s}$ contains a point from the set $Q^{(\tau-1)}(k;m)$.

Moreover, it is not difficult to show that precisely $2\tau$ of the $2s$ long special intervals in \eqref{eq5.40} are disjoint from the set
$Q^{(\tau-1)}(k;m)$.
To see this, choose two integers $q'$ and $q''$ satisfying $1\le q',q''\le q_{k+8\tau-7}-2$ such that
\begin{equation}\label{eq5.41}
J_{k+8\tau-8}(q')\subset(0,1-\alpha)
\quad\mbox{and}\quad
J_{k+8\tau-8}(q'')\subset(1-\alpha,1).
\end{equation}
Then it follows from \eqref{eq5.27} that for every integer $1\le j\le\tau$,
\begin{align}
Q^{(\tau-1)}(k;m)\cap(r_\tau^{(j-1)}(q')+J_{k+8\tau-8}(q'))
&=\emptyset,
\label{eq5.42}
\\
Q^{(\tau-1)}(k;m)\cap(r_\tau^{(j-1)}(q'')+J_{k+8\tau-8}(q''))
&=\emptyset.
\label{eq5.43}
\end{align}
For every integer $1\le j\le\tau$, now write
\begin{equation}\label{eq5.44}
\III_{2j-1}=r_\tau^{(j-1)}(q')+(0,1-\alpha)
\quad\mbox{and}\quad
\III_{2j}=r_\tau^{(j-1)}(q'')+(1-\alpha,1).
\end{equation}
The synchronization property and \eqref{eq5.41}--\eqref{eq5.44} now imply that for every integer $1\le j\le\tau$, we have
\begin{displaymath}
Q^{(\tau-1)}(k;m)\cap\III_{2j-1}=\emptyset
\quad\mbox{and}\quad
Q^{(\tau-1)}(k;m)\cap\III_{2j}=\emptyset.
\end{displaymath}
Note that the union $\III_1\cup\ldots\cup\III_{2\tau}$ modulo~$1$ is precisely the unit interval $[0,1)$ taken $\tau$ times.

Now $\III_1,\ldots,\III_{2\tau}$ are $2\tau$ of the $2s$ long special intervals in \eqref{eq5.40}.
Let the remaining long special intervals in \eqref{eq5.40} be denoted by $\III_{2j-1}$ and $\III_{2j}$, $\tau<j\le s$.
The condition of Case~$\tau$A now implies that these $2s-2\tau$ intervals are not disjoint from $Q^{(\tau-1)}(k;m)$, so that $Q^{(\tau-1)}(k;m)$ is dense in each of them.

Each $T$-image $T(\III_{2j-1})$ and $T(\III_{2j})$, $1\le j\le\tau$, has at most $1$ common point with the set $Q^{(\tau-1)}(k;m)$.
This is a contradiction, since the union
\begin{displaymath}
T(\III_1)\cup\ldots)\cup T(\III_{2\tau})
\end{displaymath}
has a \textit{substantial} intersection with the union $\III_{2\tau+1}\cup\ldots\cup\III_{2s}$, which implies that it must have a \textit{substantial} intersection with the set $Q^{(\tau-1)}(k;m)$, much more than at most $2\tau$ elements.
Thus Case~$\tau$A is impossible, and this completes the proof.
\end{proof}

\begin{proof}[Proof of Lemma~\ref{lem52}]
Since \eqref{eq5.34} is an analog of \eqref{eq5.4}, we can repeat the $T$-power extension argument and conclude that for every integer
$1-\ell_s\le h\le q_{k+8s-7}-2-\ell_s$, we have
\begin{equation}\label{eq5.45}
Q^{(s-1)}(k;m)\cap T^h(r_{s-1}+J_{k+8s-8}(\ell_s))=\emptyset.
\end{equation}
Since $J_{k+8s-8}(\ell_s)\subset J_{k+8s-16}(q^{(s-1)})$, it follows from \eqref{eq5.27} with $\tau=s-1$ that for every integer $1\le j\le s-1$,
\begin{equation}\label{eq5.46}
Q^{(s-2)}(k;m)\cap(r_{s-1}^{(j-1)}(q^{(s-1)})+J_{k+8s-8}(\ell_s))=\emptyset.
\end{equation}
Next, note that \eqref{eq5.46} are also analogs of \eqref{eq5.4}, so again we can repeat the $T$-power extension argument and conclude that for every integer $1\le j\le s-1$, and for every integer $1-\ell_s\le h\le q_{k+8s-7}-2-\ell_s$, we have
\begin{equation}\label{eq5.47}
Q^{(s-1)}(k;m)\cap T^h(r_{s-1}^{(j-1)}(q^{(s-1)})+J_{k+8s-8}(\ell_s))=\emptyset.
\end{equation}

Now, for every integer $1-\ell_s\le h\le q_{k+8s-7}-2-\ell_s$, we write
\begin{equation}\label{eq5.48}
T^h(r_{s-1}^{(j-1)}(q^{(s-1)})+J_{k+8s-8}(\ell_s))=r^{(j-1)}(\ell_s+h)+J_{k+8s-8}(\ell_s+h)
\end{equation}
for every integer $1\le j\le s-1$, and also write
\begin{equation}\label{eq5.49}
T^h(r_{s-1}+J_{k+8s-8}(\ell_s))=r^{(s-1)}(\ell_s+h)+J_{k+8s-8}(\ell_s+h).
\end{equation}
Clearly it follows from the assumption of Case~$(s-1)$B that $r^{(0)}(q),\ldots,r^{(s-1)}(q)$ form a permutation of $0,1,\ldots,s-1$ for every integer
$1\le q\le q_{k+8s-7}-2$.
Combining \eqref{eq5.45} and \eqref{eq5.47}--\eqref{eq5.49}, we have, for every integer $1\le j\le s$,
\begin{equation}\label{eq5.50}
Q^{(P-1)}(k;m)\cap(r^{(j-1)}(q)+J_{k+8s-8}(q))=\emptyset
\end{equation}
for every integer $1\le q\le q_{k+8s-7}-2$.

Note now that \eqref{eq5.50} are similar to \eqref{eq4.7}--\eqref{eq4.9} in the proof of Lemma~\ref{lem41}, so we now mimic the last part of that proof.

By definition, the $s$-copy extensions $J_{k+8s-8}(q;s)$, $1\le q\le q_{k+8s-7}-2$, give rise to $2s$ continuous chains of overlapping intervals in the
torus/circle $[0,s)$ such that the $2s$ chains completely cover the $2s$ intervals
\begin{displaymath}
[0,1-\alpha),
\quad
[1-\alpha,1),
\quad
\ldots,
\quad
[s-1,s-\alpha),
\quad
[s-\alpha,s),
\end{displaymath}
and there are only $2s$ points in $[0,s)$ that are not covered by the $2s$ chains, namely
\begin{displaymath}
j-1,
\quad
j-\alpha,
\quad
1\le j\le s.
\end{displaymath}

Combining \eqref{eq5.50} for every integer $1\le q\le q_{k+8s-7}-2$, we deduce that the set $Q^{(s-1)}(k;m)$ is not covered by the $2s$ chains. 
Indeed, if
\begin{displaymath}
m\ge2s+1+2\sum_{u=1}^sq_{k+8u-7},
\end{displaymath}
then the set $Q^{(s-1)}(k;m)$ has at least $2s+1$ distinct elements, which is more than~$2s$, giving rise to a contradiction. 
We conclude therefore that, under the conditions of Case~$(s-1)$B, we must have
\begin{displaymath}
m\le2s+2\sum_{u=1}^sq_{k+8u-7},
\end{displaymath}
and this completes the proof.
\end{proof}

%
%


\begin{thebibliography}{99}

\bibitem{BDY1}
J. Beck, M. Donders, Y. Yang.
Quantitative behavior of non-integrable systems (I).
\textit{Acta Math. Hungar.} \textbf{161} (2020), 66--184.

\bibitem{BDY2}
J. Beck, M. Donders, Y. Yang.
Quantitative behavior of non-integrable systems (II).
\textit{Acta Math. Hungar.} \textbf{162} (2020), 220--324.

\bibitem{BCY1}
J. Beck, W.W.L. Chen, Y. Yang.
Quantitative behavior of non-integrable systems (III).
\\
\texttt{arxiv.org/abs/2006.06213}, 93 pp.

\bibitem{BCY2}
J. Beck, W.W.L. Chen, Y. Yang.
Quantitative behavior of non-integrable systems (IV).
\\
\texttt{arxiv.org/abs/2012.12038}, 120 pp.

\bibitem{H}
J.H. Halton.
The distribution of the sequence $\{n\xi\}$ ($n=0,1,2,\ldots$).
\textit{Proc. Cambridge Philos. Soc.} \textbf{61} (1965), 665--670.

\bibitem{KZ75}
A. Katok, A. Zemlyakov.
Topological transitivity of billiards in polygons.
\textit{Math. Notes} \textbf{18} (1975), 760--764.

\bibitem{K2}
A.Ya. Khinchin.
\textit{Continued Fractions} (Dover, 1997).

\bibitem{KS}
D. K\"{o}nig, A. Sz\"{u}cs.
Mouvement d'un point abondonne a l'interieur d'un cube.
\textit{Rend. Circ. Mat. Palermo} \textbf{36} (1913), 79--90.

\bibitem{Sl}
N.B. Slater.
Gap and steps for the sequence $n\theta$ mod~$1$.
\textit{Proc. Cambridge Philos. Soc.} \textbf{63} (1967), 1115-1123.

\bibitem{So1}
V.T. S\'{o}s.
On the theory of diophantine approximations.
\textit{Acta Math. Acad. Sci. Hungar.} \textbf{8} (1957), 461--472.

\bibitem{So2}
V.T. S\'{o}s.
On the distribution mod~$1$ of the sequence $n\alpha$.
\textit{Ann. Univ. Sci. Budapest E\"{o}tv\"{o}s Sect. Math.} \textbf{1} (1958), 127--134.

\bibitem{Su}
J. Sur\'{a}nyi.
\"{U}ber die Anordnung der Vielfachen einer reellen Zahl mod~$1$.
\textit{Ann. Univ. Sci. Budapest E\"{o}tv\"{o}s Sect. Math.} \textbf{1} (1958), 107--111.

\bibitem{Sw}
S. Swierczkowski.
On successive settings of an arc on the circumference of a circle.
\textit{Fund. Math.} \textbf{46} (1959), 187--189.

\end{thebibliography}
\end{document}